\documentclass[a4paper,12pt]{amsart}
\usepackage{amsmath}
\usepackage{array}
\usepackage{amsthm}
\usepackage{amssymb}
\usepackage[mathscr]{eucal}


\usepackage[all]{xy}

\theoremstyle{plain}
\newtheorem*{thm*}{Theorem}
\newtheorem*{prop*}{Proposition}
\newtheorem*{rem*}{Remark}

\newtheorem{thm}{Theorem}[section]

\newtheorem{defi}[thm]{Definition}
\newtheorem{prop}[thm]{Proposition}
\newtheorem{lm}[thm]{Lemma}
\newtheorem{obs}[thm]{Observation}
\newtheorem{obs*}{Observation}

\newtheorem{claim*}{Claim}
\newtheorem{exple}[thm]{Example}
\newtheorem{nota}[thm]{Notation}

\numberwithin{equation}{thm}

\theoremstyle{remark}
\newtheorem{rem}[thm]{Remark}

\newcommand{\B}{{\mathcal B}}

\newcommand{\N}{\mathbb N}


\newcommand{\kk}{\Bbbk} 
\newcommand{\kkm}{\Bbbk \text{-Mod}} 
\newcommand{\kkmg}{\Bbbk \text{-grMod}} 

\newcommand{\Sh}{\Gamma_{sh}}

\newcommand{\C}{{\mathcal{C}}}
\newcommand{\D}{{\mathcal{D}}}

\newcommand{\LL}{{\mathcal{L}}}

\newcommand{\PP}{{\mathcal{P}}}

\newcommand{\GLie}{{\Sh^{Lie}}}

\usepackage{multicol}

\makeatletter
 {\end{multicols}\if@restonecol\onecolumn\else\clearpage\fi}
\makeatother
	\newdir{>>}{{}*!/3.5pt/:(1,-.2)@^{>}*!/3.5pt/:(1,+.2)@_{>}*!/7pt/:(1,-.2)@^{>}*!/7pt/:(1,+.2)@_{>}}
\newdir{ >>}{{}*!/8pt/@{|}*!/3.5pt/:(1,-.2)@^{>}*!/3.5pt/:(1,+.2)@_{>}}
\newdir{ |>}{{}*!/-3.5pt/@{|}*!/-8pt/:(1,-.2)@^{>}*!/-8pt/:(1,+.2)@_{>}}
\newdir{ >}{{}*!/-8pt/@{>}}
\newdir{>}{{}*:(1,-.2)@^{>}*:(1,+.2)@_{>}}
\newdir{<}{{}*:(1,+.2)@^{<}*:(1,-.2)@_{<}}

\makeindex

\begin{document}

	 \title{Leibniz homology of Lie algebras as functor homology}

		      \author{Eric Hoffbeck}
		      \author{Christine Vespa}
\address{Universit\'e Paris 13, Sorbonne Paris Cit\'e, LAGA, CNRS (UMR 7539)
99 avenue Jean-Baptiste Cl\'ement,
93430 Villetaneuse,
France}
\email{hoffbeck@math.univ-paris13.fr}

\address{ Universit\'e de Strasbourg, Institut de Recherche
 Math\'ematique Avanc\'ee, Strasbourg, France. }                        
\email{vespa@math.unistra.fr}

\date{\today}

\begin{abstract}
We prove that Leibniz homology of Lie algebras can be described as functor homology in the category of linear functors from a category associated to the Lie operad.\\
\vspace{.3cm}
\textit{Mathematics Subject Classification: 17B55, 18D50, 17B56} 
\vspace{.3cm}

\textit{Keywords}: Functor homology, operads, Lie algebras, Leibniz homology
\end{abstract}

\maketitle

\begin{minipage}{\textwidth}

\printindex
\end{minipage}

\tableofcontents

\section*{Introduction}

In the 90's, Robinson and Whitehouse defined and studied in \cite{RW2002}
$\Gamma$-homology of commutative algebras, a homology theory
suited in the context of differential graded modules over a field of positive characteristic.
Pirashvili and Richter proved in \cite{PiraR-2000} that this homology theory can be interpreted as functor homology, for functors from the category $\Gamma$ of finite pointed sets. This category can be viewed as a category associated to the commutative set operad.
For associative algebras, similar results are obtained by Pirashvili and Richter in \cite{PiraR-2002}.
In this paper, the authors interpret usual Hochschild and cyclic homology of associative algebras as functor homology. In this setting, the category $\Gamma$ is replaced by a category associated to the associative set operad.
In \cite{Liv-Richter}, Livernet and Richter give a description of $E_n$-homology of non-unital commutative algebras as functor homology. In this setting the category $\Gamma$ is replaced by a suitable category of epimorphisms related to planar trees with $n$-levels.

In all these results, homology theories are obtained as Tor functors in a category of functors, between a Loday functor and a functor $t$ playing the role of the base ring. In \cite{Pira-Hodge}, Pirashvili uses the interpretation of homology theories as functor homology to give purely homological proofs of Hodge decompositions of higher order Hochschild homology of commutative algebras.

This paper is motivated by the following natural question:
is it possible to describe homologies of Lie algebras as functor homology? 

We obtain that the
Leibniz homology of Lie algebras can be interpreted as functor homology. More precisely, we prove the following theorem:

\begin{thm*}
For $A$ a Lie algebra and $M$ a $A$-module, one has an isomorphism:
$$H_*^{Leib}(A,M)\simeq Tor^{\GLie}_*(t,\LL^{Lie}_{sh}(A,M))$$
where $\GLie$ is a suitable linear category associated to the Lie operad and $\LL^{Lie}_{sh}(A,M)$ is a generalized Loday functor.
\end{thm*}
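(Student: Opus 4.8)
The plan is to follow the strategy pioneered by Pirashvili--Richter: realize the functor homology group $Tor^{\GLie}_*(t,\LL^{Lie}_{sh}(A,M))$ as the homology of an explicit chain complex, and then identify that complex with the standard complex computing Leibniz homology $H_*^{Leib}(A,M)$, whose degree $n$ term is $M\otimes A^{\otimes n}$. By definition, $Tor^{\GLie}_*(t,-)$ is computed by choosing a projective resolution $P_\bullet \to t$ in the category of $\GLie$-modules and applying the tensor product (coend) over $\GLie$ with $\LL^{Lie}_{sh}(A,M)$. The whole computation therefore reduces to two tasks: (i) producing a manageable projective resolution of $t$ whose differential is built from the structural ``bracket'' morphisms of $\GLie$, and (ii) evaluating the coend $P_\bullet \otimes_{\GLie}\LL^{Lie}_{sh}(A,M)$ and recognizing the Leibniz boundary.

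First I would construct the projective resolution of $t$. The natural projective generators of $\GLie$-Mod are the representable functors $\kk[\GLie(n,-)]$, whose coend against any functor $F$ recovers $F(n)$ by the co-Yoneda lemma; by Yoneda a map $\kk[\GLie(n,-)]\to \kk[\GLie(n-1,-)]$ is the same datum as a $\kk$-linear combination of morphisms $n-1\to n$ in $\GLie$. I would try to arrange $P_n=\kk[\GLie(n,-)]$ with differential the signed sum, over the pairs $1\le i<j\le n$, of the structural morphisms of $\GLie$ that encode the insertion of a single Lie bracket in slots $i$ and $j$. This is precisely the resolution coming from the (Koszul, resp.\ bar) presentation of the Lie operad, and its exactness is equivalent to an acyclicity statement for the associated operadic complex, independent of $A$ and $M$.

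Next, applying $-\otimes_{\GLie}\LL^{Lie}_{sh}(A,M)$ to $P_\bullet$ and using co-Yoneda, the term $P_n\otimes_{\GLie}\LL^{Lie}_{sh}(A,M)$ becomes $\LL^{Lie}_{sh}(A,M)(n)=M\otimes A^{\otimes n}$. I would then trace the coend identifications to show that the induced differential is exactly the Leibniz boundary: each bracket-insertion morphism in $\GLie$ is carried by the Loday functor to the corresponding bracket term $x_1\otimes\cdots\otimes[x_i,x_j]\otimes\cdots$ (and, on the slot carrying $M$, to the module action), with the signs dictated by the chosen orientation of the resolution. Matching these signs and checking that the total complex coincides term by term with $\cdots\to M\otimes A^{\otimes n}\to M\otimes A^{\otimes(n-1)}\to\cdots$ is where the bookkeeping concentrates, but it is a finite and essentially formal verification once the morphisms of $\GLie$ have been described explicitly as in the earlier sections.

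The main obstacle I anticipate is step (i): one must choose the resolution so that it is genuinely exact \emph{and} so that the morphisms appearing in its differential are exactly those that the Loday functor sends to single-bracket operations. Exactness is the delicate half, since it is an intrinsic homological property of $\GLie$ that cannot be read off from the differential matching; I would establish it once and for all, either by invoking the Koszulness of the operad $\mathrm{Lie}$ (so that the representable resolution is its Koszul complex) or by exhibiting an explicit contracting homotopy on the $\kk[\GLie(n,-)]$ exploiting the combinatorics of morphisms in $\GLie$. Granting exactness, the isomorphism $H_*^{Leib}(A,M)\simeq Tor^{\GLie}_*(t,\LL^{Lie}_{sh}(A,M))$ follows by comparing the two complexes.
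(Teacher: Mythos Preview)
Your overall architecture is equivalent to the paper's, though packaged dually. The paper does not resolve $t$ directly; instead it verifies the three abstract axioms characterizing $Tor^{\GLie}_*(t,-)$ for the functor $T\mapsto H^{Leib}_*(T)$, the substantive axiom being that $H^{Leib}_i(P_n)=0$ for $i>0$ on the projective generators $P_n=\GLie([n],-)$. But your proposed resolution, evaluated at the object $[n]$, \emph{is} the complex $C^{Leib}_*(P_n)$: exactness of your $P_\bullet\to t$ at $[n]$ is literally the paper's Proposition~\ref{Leib-projectifs}. So the two strategies collapse to the same hard lemma.

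Two points. First, a variance slip: to resolve the \emph{right} module $t$ you need the contravariant representables $\GLie(-,[n])$, not $\GLie([n],-)$. With the covariant ones, a map $P_n\to P_{n-1}$ would require a morphism $[n-1]\to[n]$ in $\GLie$, and $\GLie([n-1],[n])=0$. After correcting this, a map $\GLie(-,[n])\to\GLie(-,[n-1])$ corresponds by Yoneda to an element of $\GLie([n],[n-1])$, and the signed sum of the $d_{i,j}$ (with $0\le i<j\le n$, not $1\le i$) is exactly the Leibniz differential; under $-\otimes_{\GLie}\LL^{Lie}_{sh}(A,M)$ this indeed yields the Leibniz complex of $(A,M)$, as you anticipate.

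Second, and more importantly: you correctly flag exactness as the delicate step, but both of your suggested routes are too optimistic as stated. Koszulness of the operad $Lie$ does not directly hand you exactness of this particular complex of functors on $\GLie$; the paper in fact remarks that Koszulness of the \emph{category} $\GLie$ is a \emph{consequence} of its analysis, not an input. Nor does the paper produce a contracting homotopy. Instead, Section~4 proves $H^{Leib}_i(P_n)=0$ by a substantial combinatorial argument: it describes an explicit basis of $\GLie([n],[m])$ in terms of split tuples, filters $\bigoplus_m C^{Leib}_m(P_n)$ by the lexicographic order on the underlying tuples, checks compatibility of the filtration with the differential via a rewriting lemma for brackets of basis elements of $Lie$, and finally identifies each graded piece with the chain complex of a simplex, hence acyclic. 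A spectral sequence then gives the vanishing. This is where the real work lies, and your outline does not yet contain it.
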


\vspace{1cm}
Leibniz homology was defined by Loday in \cite{Loday1} in order to understand periodicity phenomena in algebraic K-theory (see \cite{Loday2}). This homology can be defined via an explicit complex, 
obtained as the non-commutative analogue of the Chevalley-Eilenberg complex and it is defined for Leibniz algebras, which are non-commutative variants of Lie algebras. In Leibniz algebras, the bracket is not required
to be anti-commutative anymore, and  the Jacobi relation is replaced by the Leibniz relation,
which can be seen as a lift of the Jacobi relation in the non-commutative context.
As Lie algebras are a particular case of Leibniz algebras, the Leibniz homology can be used to
compute homological invariants of Lie algebras.

The most striking result concerning Leibniz homology is the following Loday-Cuvier theorem \cite{Cuvier, Loday-Cyclic}: for an associative algebra $A$, the Leibniz homology of the Lie algebra $gl(A)$ is isomorphic to
the free associative algebra over the Hochschild homology of $A$. 
This theorem is the non-commutative variant of the Loday-Quillen-Tsygan theorem \cite{LQ, FT}
which states that the Chevalley-Eilenberg homology of $gl(A)$ is isomorphic to
the free exterior algebra over the cyclic homology of $A$. Recall that cyclic homology is naturally isomorphic to the additive K-theory which is the analogue of K-theory obtained by replacing the general linear group $GL(A)$ by the Lie algebra $gl(A)$.

\vspace{1cm}

In order to write Leibniz homology of Lie algebras as functor homology, two objects have to be defined: 
a category playing for Lie algebras the role of $\Gamma$ and a variant of the Loday functor for this category.
One main difference with the cases of commutative and associative algebras is that the operad encoding Lie algebras is not a set operad.
This requires the use of linear categories (i.e. categories enriched over the category of $\kk$-modules), instead of usual categories.

The proof of the main theorem is based on a characterization of the Tor functors. The principal difficulty is to prove that the homology vanishes on projective generators. This result is the heart of this paper. To prove it, we need to consider a linear category associated to the operad $Lie$, called $\GLie$, 
with a shuffle condition, requiring some maps to preserve a part of the order. This condition is the main reason why we obtain Leibniz homology as functor homology and not Chevalley-Eilenberg homology.

The proof of the vanishing of the Leibniz homology on projective generators can be decomposed into three steps. We  begin to describe a basis of the morphisms spaces in the linear category $\GLie$. This allows us to define a filtration on the $\kk$-module associated to the complex computing the homology of projective generators. The proof of the compatibility of this filtration with the differential requires to use a basis of the operad $Lie$ and to understand its behaviour with respect to composition. Then we identify the associated graded complex with a sum of acyclic complexes. This step is heavily based on the thorough understanding of the combinatorial objects associated to the basis of the morphisms spaces in the linear category $\GLie$.

\vspace{1cm}
In a recent work \cite{Fresse}, Fresse proves that operadic homology can be obtained as functor homology, by a method different than ours. 
For the commutative operad and the associative operad, Fresse recovers results of  Pirashvili and Richter \cite{PiraR-2000, PiraR-2002}. 
For the operad $Lie$, Fresse obtains a description of Chevalley-Eilenberg homology of Lie algebras as functor homology over the category $\Gamma^{Lie}$. Our theorem shows that functor homology over the category $\GLie$ is naturally related to Leibniz homology of Lie algebras. The category $\GLie$ has the advantage to be more manageable than the category $\Gamma^{Lie}$ for effective computations.

\vspace{1cm}
The paper is organized as follows: 
Section 1 consists in recollections on functor homology of enriched categories and on 
some operadic definitions.
In Section 2 we define our category $\GLie$ and the associated Loday functor.
After some recollections of Leibniz homology, we state in Section 3 the main theorem of the paper.
The last section is devoted to the proof of the vanishing of the Leibniz homology on projective generators.

\section*{Acknowledgements}
The first author would like to thank Birgit Richter for explaining to him the usual strategy to interpret homology theories as functor homology. The authors gratefully acknowledge the hospitality of the Isaac Newton Institut in Cambridge. 
The first author is supported in part by the Sorbonne-Paris-Cit\'e IDEX grant Focal and the ANR grant ANR-13-BS02-0005-02 CATHRE. The second author is supported by the ANR grant ANR-11-BS01-0002 HOGT.

\vspace{2cm}
\textbf{Notations}: The following categories will be useful throughout the whole paper.
\begin{itemize}
\item $Set$ is the category of sets with morphisms the set maps;
\item $\kkm$ is the category of modules over a fixed commutative ground ring $\kk$;
\item $\kkmg$ is the category of $\N$-graded modules over $\kk$;
\item $\Delta$ is the simplicial category, i.e. the category with objects ordered finite sets $[n]$ and morphisms order preserving maps;
\item $\Gamma$ is the skeleton of the category of finite pointed sets having as objects $[n]=\{0,1, \ldots, n\}$ with $0$ as basepoint and morphisms the set maps $f:[n] \to [m]$ such that $f(0)=0$;
\item $\Gamma^{surj}$ is the category having as objects finite pointed sets and as morphisms the pointed surjective maps;
\item $\Sh$ is  the category with objects ordered finite sets $[n]$ and morphisms pointed shuffling maps $f$, that is maps such that $min(f^{-1}(i))<min(f^{-1}(j))$ whenever $i<j$;
\item $\Sh^{surj}$ is  the category with objects finite ordered sets $[n]$ and morphisms pointed shuffling surjections;
\end{itemize}

The Lie algebras we consider in this paper are algebras over the Lie operad. In particular, in characteristic 2, a Lie bracket is just antisymmetric and we do not have in general $[x,x]=0$.


\vspace{2cm}
\section{Recollections on enriched category, functor homology and operads}
In this section we briefly recall some definitions and facts about enriched categories, functor homology and operads useful in the sequel.

\subsection{Enriched  category}
One of the standard references for symmetric monoidal categories and enriched categories is the book of Borceux \cite[Chapter 6]{Borceux2}.
Let $(\C, \otimes, 1)$ be a closed symmetric monoidal category. Recall that a symmetric monoidal category $\C$ is closed when for each object $C \in \C$, the functor $- \otimes C: \C \to \C$ admits a right adjoint denoted by $[C,-]: \C \to \C$.
\begin{defi}
 A category $\D$ enriched  over $\C$ (or a $\C$-category) consists of a class $I$ (representing the objects of $\D$) and for any objects $i,j,k \in I$ an object of $\C$: $\D(i,j)$ (representing the morphisms from $i$ to $j$ in $\D$) and morphisms in $\C$
$$\D(i,j) \otimes \D(j,k) \to \D(i,k) \quad \text{and} \quad 1\to \D(i,i)$$
(representing the composition of morphisms in $\D$ and the identity morphism on $i$). These structure morphisms are required to be associative and unital in the obvious sense.
\end{defi}

\begin{exple}
\begin{enumerate}
\item A category enriched over $(Set, \times, [0])$ is an usual category.
\item A category enriched over $(\kkm, \otimes, \kk)$ is a $\kk$-linear category and  a functor enriched over $(\kkm, \otimes, \kk)$ is a $\kk$-linear functor.
\item The category $\C$ can be provided with the structure of an enriched category over $(\C, \otimes, 1)$ using the bifunctor $[-,-]: \C^{op} \times \C \to \C$ whose composition with the forgetful functor $\C(1,-): \C \to Set$ is just $\C(-,-): \C^{op} \times \C \to Set$.

\end{enumerate}
\end{exple}

\begin{defi}
Let $\B$ and $\D$ categories enriched over $\C$. A functor enriched over $\C$ (or a $\C$-functor) from $\B$ to $\D$, $F:\B \to \D$ consists of an object of $\D$, $F(B)$ for every object $B \in \B$ and of morphisms in $\C$:
$$F(B,B'): \B(B,B') \to \D(F(B), F(B'))$$
for every pair of objects $B,B' \in \B$ that are associative and unital. 
\end{defi}

\begin{defi}
A natural transformation enriched over $\C$, $\sigma: F \to F'$, between two functors enriched over $\C$, $F,F': \B \to \D$ consists in giving, for every object $B \in \B$, a morphism:
$$\sigma_B: 1 \to \D(F(B), F'(B))$$
in $\C$, 
for every object $B$ of $\B$, that satisfy obvious commutativity conditions for a natural transformation.
\end{defi}

We denote by $\C\text{-Nat}(F,F')$ the object of natural transformations enriched over $\C$ between $F$ and $F'$.

\begin{thm}[\textbf{Enriched Yoneda lemma}]
Let $\B$ be a small category enriched over $\C$. For every object $B \in \B$ and every functor enriched over $\C$, $F: \B \to \C$, the object of natural transformations enriched over $\C$ from $\B(B,-)$ to $F$ exists and there is an isomorphism in $\C$:
$$\C\text{-Nat}(\B(B,-),F) \simeq F(B)$$
which is natural both in $F$ and in $B$.
\end{thm}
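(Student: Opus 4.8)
The plan is to realise $FB$ explicitly as the object of enriched natural transformations, thereby establishing both its existence and the claimed isomorphism simultaneously. I would first unwind the definition of $\C\text{-Nat}(G,F)$ into its end form, namely as the equalizer of the two canonical maps
$$\prod_{b} [Gb, Fb] \rightrightarrows \prod_{b,c} [\B(b,c), [Gb, Fc]],$$
one leg being induced by the enriched action $F_{b,c}\co \B(b,c) \to [Fb,Fc]$ of $F$ (postcomposition), the other by the action $G_{b,c}$ of $G$ (precomposition). Since the theorem asserts that this object exists, I would not presume the end exists abstractly; instead I would verify directly that $FB$, equipped with a suitable cone, satisfies the universal property of this equalizer for $G=\B(B,-)$.

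Next I would construct the comparison in both directions at the level of objects of $\C$. For $\Phi\co \C\text{-Nat}(\B(B,-),F) \to FB$ (``evaluation at the identity''), I would compose the canonical projection onto the $B$-component $[\B(B,B),FB]$ with the map $[\B(B,B),FB]\to [1,FB]\cong FB$ obtained by precomposing with the unit $u_B\co 1 \to \B(B,B)$ of the enriched category $\B$. For the candidate inverse $\Psi\co FB \to \C\text{-Nat}(\B(B,-),F)$, I would use, for each object $b$, the adjoint of
$$\B(B,b)\otimes FB \xrightarrow{F_{B,b}\otimes \mathrm{id}} [FB,Fb]\otimes FB \xrightarrow{\mathrm{ev}} Fb,$$
which produces a family $\B(B,b)\to [FB,Fb]$; I would then check that this family equalizes the two legs above, so that it factors through the end and genuinely defines a map into $\C\text{-Nat}(\B(B,-),F)$.

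The main work, and the heart of the argument, lies in the two verifications. Checking the $\C$-naturality of the family defining $\Psi$ (equivalently, that it lands in the equalizer) is where the associativity axiom of the enriched functor $F$ enters, combined with the associativity of composition in $\B$ and the coherence of the symmetric monoidal structure on $\C$; this is the step most prone to bookkeeping with the internal-hom adjunctions, and I expect it to be the principal obstacle. Verifying $\Phi\circ\Psi = \mathrm{id}_{FB}$ is then immediate from the unit axiom of $F$, namely $F_{B,B}\circ u_B = u_{FB}$, together with the triangle identities for $\otimes$ and $[-,-]$; and $\Psi\circ\Phi = \mathrm{id}$ follows because the equalizer relations already satisfied by an arbitrary enriched natural transformation force it to be determined by its single value on $\mathrm{id}_B$.

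Finally, I would record the naturality statements. Naturality in $F$ is essentially formal, since $\Phi$ and $\Psi$ are assembled from the structure maps of $F$ and are visibly compatible with enriched natural transformations $F\to F'$. Naturality in $B$ amounts to compatibility with the precomposition action of morphisms of $\B$ on the representable functor $\B(B,-)$; once the object-level maps $\Phi$ and $\Psi$ are in hand, this reduces once more to the associativity axiom of $\B$ and is settled by a routine diagram chase.
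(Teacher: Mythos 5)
The paper does not prove this statement: it is recalled in the preliminaries as a classical result, with Borceux's book cited as the standard reference, so there is no in-paper argument to compare against. Your proposal is the standard proof of the enriched Yoneda lemma (evaluation at the unit $1\to\B(B,B)$ one way, the adjoint of the action map $\B(B,b)\otimes FB\to Fb$ the other way, with the unit axiom of $F$ giving $\Phi\circ\Psi=\mathrm{id}$ and the wedge condition at the pair $(B,c)$ giving $\Psi\circ\Phi=\mathrm{id}$), and it is correct; this is essentially the argument found in the cited reference. One small point of care: you rightly insist on not presupposing that the end exists, but you then phrase its universal property as that of an equalizer of products $\prod_b[Gb,Fb]\rightrightarrows\prod_{b,c}[\B(b,c),[Gb,Fc]]$, and those products need not exist in $\C$, which is only assumed closed symmetric monoidal. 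The clean fix is to state the universal property representably --- $\C\text{-Nat}(G,F)$ should represent the functor sending $X\in\C$ to the set of families $X\to[Gb,Fb]$ satisfying the naturality (wedge) condition --- and to verify that $FB$ represents this functor; your maps $\Phi$ and $\Psi$ and all subsequent verifications go through verbatim in that formulation.
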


\subsection{Functor homology}

\begin{defi}
For $\D$ a $\C$-category and $F: \D^{op} \to \C$ and $G: \D \to \C$ a pair of $\C$-functors, the enriched tensor product of $F$ and $G$, $F \underset{\D}{\otimes}G$ is the coequalizer
$$\xymatrix{
\underset{d,d' \in D}{\coprod} F(d') \otimes \D(d,d') \otimes G(d) \ar@<+0.4ex>[r]^-{U} \ar@<-0.4ex>[r]_-{B}&\underset{d \in D}{\coprod} F(d) \otimes G(d) \ar[r] & F \underset{\D}{\otimes}G
}$$
in $\C$ where the map $U$ is induced by the composite
$$F(d') \otimes \D(d,d') \otimes G(d) \xrightarrow{1 \otimes F(d,d')} F(d') \otimes \C(F(d'), F(d)) \otimes G(d) \xrightarrow{ev \otimes 1} F(d) \otimes G(d) $$
where $F(d,d')$ is the morphism in $\C$ given because $F$ is a $\C$-functor and $ev$ is the evaluation map. The map $B$ is induced by a similar composite  with $G$ in place of $F$.
\end{defi}
Recall that the evaluation map is the counit of the adjunction on $\C$ of the monoidal product with the internal-hom.

For a category $\D$ enriched over $\C$, we call left $\D$-modules covariant $\C$-functors from $\D$ to  $\C$ and right $\D$-modules contravariant $\C$-functors from $\D$ to $\C$. Let $\D$-mod (resp. mod-$\D$ ) the category of left (resp. right) $\D$-modules.
If $\C$ is an abelian category, the categories $\D$-mod and mod-$\D$ are abelian. In the sequel $\C$ is an abelian category.

\begin{prop} \label{exact}
The bifunctor $-\underset{\D}{\otimes}-: \text{mod-}\D \times \D\text{-mod} \to \C$ is right exact with respect to each variable.
\end{prop}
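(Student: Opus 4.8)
The plan is to prove right exactness in each variable by exhibiting the partial tensor products as left adjoints; since a left adjoint between abelian categories preserves all colimits, in particular cokernels and finite coproducts, it is automatically additive and hence right exact. By the evident symmetry of the coequalizer defining $-\underset{\D}{\otimes}-$, together with the symmetry of $\otimes$ on $\C$, it suffices to treat one variable, say the first: I fix $G \in \D\text{-mod}$ and produce a right adjoint to $-\underset{\D}{\otimes}G\colon \text{mod-}\D \to \C$.

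To construct this right adjoint, I fix $C \in \C$ and form the right $\D$-module $[G(-),C]$, namely the contravariant $\C$-functor obtained by composing the covariant $\C$-functor $G$ with the internal-hom functor $[-,C]\colon \C^{op}\to\C$ supplied by the closed structure. The key step is then to establish a natural isomorphism
$$\C\bigl(F\underset{\D}{\otimes}G,\,C\bigr)\;\simeq\;\text{mod-}\D\bigl(F,\,[G(-),C]\bigr).$$
I would obtain it by unwinding the universal property of the coequalizer in the definition: a morphism $F\underset{\D}{\otimes}G\to C$ is precisely a family of morphisms $F(d)\otimes G(d)\to C$ that coequalizes the two maps $U$ and $B$. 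Transposing each such morphism through the tensor-hom adjunction $\C\bigl(F(d)\otimes G(d),C\bigr)\simeq\C\bigl(F(d),[G(d),C]\bigr)$ of the closed structure turns the family into a collection of morphisms $F(d)\to[G(d),C]$, and the condition that the original family coequalizes $U$ and $B$ becomes exactly the enriched naturality condition making this collection a morphism of right $\D$-modules $F\to[G(-),C]$. This identification is natural in $F$ and $C$, which is what the adjunction requires.

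The main obstacle is this last translation in the enriched setting: one must verify that, after transposition through the internal hom, the equality of the two composites defining $U$ and $B$ matches term by term the enriched naturality square for a $\C$-natural transformation, which requires a careful bookkeeping of the module-action maps together with the evaluation map used in the definition of $U$ and $B$. Once this is in place, the proposition follows formally. If one prefers to avoid naming the adjoint, there is an equivalent elementary route: each functor $-\otimes C$ on $\C$ is right exact (as a left adjoint to $[C,-]$), coproducts preserve cokernels, and colimits commute among themselves, so the coequalizer defining $F\underset{\D}{\otimes}G$ preserves cokernels in the variable $F$; I would use whichever of the two arguments reads more cleanly.
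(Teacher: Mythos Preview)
Your argument is correct. The paper states this proposition without proof, treating it as a standard fact about enriched tensor products; your approach via left adjointness---exhibiting $C\mapsto[G(-),C]$ as right adjoint to $-\underset{\D}{\otimes}G$ through the coequalizer universal property and the tensor--hom adjunction in $\C$---is the canonical way to supply the missing argument, and the elementary alternative you sketch (right exactness of each $-\otimes C$ together with commutation of colimits) works equally well.
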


We have the following characterization of homology theories in the enriched setting:

\begin{prop} \label{char-homo}
Let $\D$ be a $\C$-category and $G$ a right $\D$-module.
 If $H_*$ is a functor from $\D$-mod to $\C$ such that 
\begin{enumerate}
 \item $H_*$ sends short exact sequences to long exact sequences,
 \item for all $F \in \D$-mod we have a natural isomorphism $H_0(F) \simeq G\otimes_\D F$ ,
 \item $H_i(F)=0$ for all projective $\D$-modules $F$ and $i>0$,
\end{enumerate}
then, for all $F \in \D$-mod, we have a natural isomorphism 
$$H_i(F)\simeq Tor_i^\D(G,F).$$
\end{prop}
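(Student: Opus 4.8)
The plan is to recognise Proposition~\ref{char-homo} as the enriched version of Grothendieck's characterisation of universal homological $\delta$-functors, and to prove it by a dimension-shifting induction comparing $H_*$ with the left derived functors of $G \otimes_\D -$, that is with $Tor_*^\D(G,-)$.

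First I would check that $\D$-mod has enough projectives, so that $Tor_*^\D(G,-) = L_*(G\otimes_\D -)$ is defined. By the enriched Yoneda lemma, $\C\text{-Nat}(\D(d,-),F)\simeq F(d)$ naturally in $F$; since kernels, cokernels and exactness in $\D$-mod are computed objectwise, the evaluation functor $F \mapsto F(d)$ is exact, so each representable $\D(d,-)$ is projective, and more generally $\D(d,-)\otimes P$ is projective for $P$ projective in $\C$. Every left $\D$-module is a quotient of a direct sum of such objects, which provides enough projectives. Proposition~\ref{exact} says $G\otimes_\D -$ is right exact, hence $Tor_0^\D(G,-)\simeq G\otimes_\D -$; combined with hypothesis~(2) this gives a natural isomorphism $H_0 \simeq Tor_0^\D(G,-)$ in degree~$0$.

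For the inductive step, fix $F$ and choose a short exact sequence $0 \to K \to P \to F \to 0$ with $P$ projective. Applying hypothesis~(1) to $H_*$ and the long exact sequence of $Tor$ to $Tor_*^\D(G,-)$ produces two long exact sequences; hypothesis~(3) kills $H_i(P)$ for $i>0$, while projectivity kills $Tor_i^\D(G,P)$ for $i>0$. For $i\ge 2$ this yields the dimension-shifting isomorphisms $H_i(F)\simeq H_{i-1}(K)$ and $Tor_i^\D(G,F)\simeq Tor_{i-1}^\D(G,K)$, and for $i=1$ both $H_1(F)$ and $Tor_1^\D(G,F)$ are identified with $\ker(G\otimes_\D K \to G\otimes_\D P)$ through the degree-$0$ isomorphism. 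Arguing by induction on $i$ starting from the degree-$0$ comparison, one obtains the desired natural isomorphism $H_i(F)\simeq Tor_i^\D(G,F)$.

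The point requiring the most care, and the main obstacle, is naturality together with independence of the comparison from the chosen resolution. The cleanest way to handle this is to observe that hypotheses~(1) and~(3), combined with the existence of enough projectives, say precisely that $H_*$ is coeffaceable in positive degrees, hence a universal homological $\delta$-functor; the same holds for $Tor_*^\D(G,-)$. A universal $\delta$-functor is determined up to unique isomorphism by its value in degree~$0$, so the isomorphism of hypothesis~(2) extends uniquely to an isomorphism of $\delta$-functors $H_* \simeq Tor_*^\D(G,-)$ that is automatically natural in $F$. Alternatively one checks directly, via the comparison theorem for projective resolutions, that the maps built in the induction commute with the connecting homomorphisms and are independent of $P$; this is the only genuinely technical verification.
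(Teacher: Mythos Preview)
The paper states this proposition without proof, treating it as a standard result in the recollections section on functor homology; there is therefore no paper proof to compare against. Your argument is correct and is exactly the standard one: you identify $Tor_*^\D(G,-)$ and $H_*$ as universal (coeffaceable) homological $\delta$-functors agreeing in degree~$0$, and conclude via Grothendieck's uniqueness theorem, with the dimension-shifting computation making the comparison explicit. The only mild caveat is that the statement as written does not explicitly assume $\C$ has enough projectives or that $\D$ is small, which you implicitly use when producing projective covers in $\D$-mod; in the paper's intended application $\C = \kkm$ and $\D = \GLie$, so this is harmless, but it is worth flagging that your argument needs these standing hypotheses.
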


In the rest of the paper, all our categories will be enriched over  $(\kkm, \otimes_\kk, \kk)$.


\subsection{Algebraic operads}
A good reference for operads is the book of Loday and Vallette \cite{LV}.
We denote by $OrdSet$ the category of finite ordered sets (with order-preserving bijections as morphisms)
and by $Fin$ the category of finite sets (with bijections as morphisms).

\begin{defi}
A collection (resp. symmetric collection) is a contravariant functor from the category $OrdSet$ 
(resp. $Fin$) to the category $\kkm$. 
\end{defi}

For a collection $P$, we denote $P([n-1])$ by $P(n)$ for $n \geq 0$.

Let $P$ and $Q$ be two symmetric collections. One defines their symmetric composition $P \circ Q$ by 

$$(P \circ Q)(n) = \bigoplus_{m} P(m) \otimes_{\Sigma_m} \left( \bigoplus_{\alpha \in Set([n-1],[m-1]) } Q(\alpha^{-1}(0)) \otimes \ldots  \otimes Q(\alpha^{-1}(m-1)) \right).$$

Let $P$ and $Q$ be two collections.

One defines their nonsymmetric composition $P \circ Q$ by 
$$(P \circ Q)(n) = \bigoplus_{m} P(m) \otimes \left( \bigoplus_{\alpha \in \Delta([n-1],[m-1])} Q(\alpha^{-1}(0)) \otimes \ldots  \otimes Q(\alpha^{-1}(m-1)) \right).$$

When $P(0)=Q(0)=0$, one defines their shuffle composition $P \circ_{sh} Q$ by 
$$(P \circ_{sh} Q)(n) = \bigoplus_{m} P(m) \otimes \left( \bigoplus_{\alpha \in \Sh([n-1],[m-1])}Q(\alpha^{-1}(0)) \otimes \ldots  \otimes Q(\alpha^{-1}(m-1)) \right).$$

In the rest of the paper, unless otherwise stated, all our collections will be reduced, that is $P(0)=0$. 
This hypothesis will be required to get a functor from symmetric operads to shuffle operads.

\begin{rem}
Note that 
$$(P \circ_{sh} Q)(n) \simeq \bigoplus_{m} P(m) \otimes \left( \bigoplus_{\alpha \in \Sh^{surj}([n-1],[m-1])}Q(\alpha^{-1}(0)) \otimes \ldots  \otimes Q(\alpha^{-1}(m-1)) \right)$$
since $Q(0)=0$.
\end{rem}

\begin{rem}
Note that the previous definitions and remark can be extended to a symmetric monoidal category $(\C, \otimes, 1)$ having an initial object $0$ satisfying $C \otimes 0 \simeq 0 \simeq 0 \otimes C$, $\forall C \in \C$. This is the case for $(Set, \times, [0])$, the initial object being $\emptyset$.
\end{rem}

\begin{defi}
A linear nonsymmetric (resp. shuffle) operad is a monoid $P$ in the category of collections equipped with the nonsymmetric (resp. shuffle) composition.
A linear symmetric operad is a monoid $P$ in the category of symmetric collections equipped with the symmetric composition.
\end{defi}

An algebra $A$ over an operad $P$ is a $\kk$-module endowed with an action of $P$, that is there are maps 
$\theta : P(n) \otimes A^{\otimes n} \to A$ compatible with the monoid structure of $P$
(and with the symmetric group action in the symmetric case).

For $P$ a symmetric operad, an $A$-module $M$ is a $\kk$-module endowed with $M \oplus A$ has a $P$-algebra structure extending the 
structure on $A$ and satisfying
$$\theta (P(n) \otimes (M^{\otimes k} \otimes A^{\otimes n-k}))=0 \textrm { if } n \geq k \geq 2.$$

\begin{exple} \label{uCom}
\begin{enumerate}

\item 
Usual commutative algebras can be seen as algebras over a symmetric operad called $Com$, 
determined by $Com(n)=\kk$ for all $n\geq1$ and $Com(0)=0$, and equipped with the obvious composition.
The notion of $A$-modules for $A$ a commutative algebra is the usual one.
The case of unital commutative algebras can be dealt using the symmetric operad called $uCom$, 
determined by $uCom(n)=\kk$ for all $n\geq 0$ and equipped with the obvious composition.

\item Lie algebras can be seen as algebras over a symmetric operad called $Lie$.
This operad is generated by a symmetric bracket which satisfies the Jacobi relation.
The notion of $A$-modules for $A$ a Lie algebra is the usual one.

\end{enumerate}
\end{exple}

There exists a forgetful functor $(-)^{sh}$ from symmetric collections to nonsymmetric collections.
This functor allows us to make a direct link between the notion of symmetric operads and the notion of shuffle operads because of the following lemma,
proved by Dotsenko and Khoroshkin:
\begin{lm} \cite[Proposition 3]{DK2010} 
The forgetful functor $(-)^{sh}$ is monoidal.
\end{lm}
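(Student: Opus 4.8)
The plan is to exhibit, for reduced symmetric collections $P$ and $Q$, a natural isomorphism $(P \circ Q)^{sh} \simeq P^{sh} \circ_{sh} Q^{sh}$ of nonsymmetric collections, together with an isomorphism on the monoidal units, and then to check that these are compatible with the associativity and unit constraints. On underlying objects the functor $(-)^{sh}$ is simply restriction along the order-forgetting functor $OrdSet \to Fin$, so that $P^{sh}(m) = P(m)$ and $Q^{sh}(k) = Q(k)$ as $\kk$-modules; all the content therefore lies in comparing the two composition products.

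First I would reduce the indexing set of $P \circ Q$ from all maps to surjections. Since all our collections are reduced, $Q(0) = 0$, so any summand of $(P \circ Q)(n)$ indexed by a non-surjective $\alpha \in Set([n-1],[m-1])$ contains a factor $Q(\alpha^{-1}(i)) = Q(\emptyset) = 0$ and hence vanishes; the same reduction holds on the shuffle side by the Remark identifying $\Sh$ with $\Sh^{surj}$. Thus both products are supported on surjections.

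The key step is the analysis of the $\Sigma_m$-action. I would observe that $\Sigma_m$ acts on the set of surjections $[n-1] \twoheadrightarrow [m-1]$ by postcomposition, with $(\sigma\alpha)^{-1}(i) = \alpha^{-1}(\sigma^{-1}(i))$, and that this action is \emph{free}: if $\sigma \circ \alpha = \alpha$ with $\alpha$ surjective then $\sigma = \mathrm{id}$. Each orbit therefore has exactly $m!$ elements and contains a unique shuffling surjection, namely the representative whose fibers are listed in increasing order of their minima, which is precisely the defining condition of $\Sh^{surj}([n-1],[m-1])$. For a fixed shuffle representative $\alpha$, an element $\sigma\alpha$ of its orbit contributes the same tensor factors permuted by $\sigma$, so freeness lets me compute the coinvariants by using the group action to move every summand onto the $\alpha$-summand at the cost of acting on $P(m)$:
$$P(m) \otimes_{\Sigma_m} \Big( \bigoplus_{\beta \in \mathrm{orbit}(\alpha)} \bigotimes_i Q(\beta^{-1}(i)) \Big) \;\simeq\; P(m) \otimes \bigotimes_i Q(\alpha^{-1}(i)),$$
with no residual quotient. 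Summing over orbits, equivalently over $\Sh^{surj}$, and over $m$ then yields exactly $(P^{sh} \circ_{sh} Q^{sh})(n)$.

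Finally I would verify naturality of this isomorphism in $P$ and $Q$ and its compatibility with the monoidal structure. The unit isomorphism is immediate, since the unit collection (concentrated in arity $1$ with value $\kk$) is fixed by $(-)^{sh}$. I expect the main obstacle to be the coherence bookkeeping with the associativity constraints: one must check that the untwisting isomorphism is itself associative, i.e.\ that the shuffle representatives chosen for a two-step composite $(P \circ Q) \circ R$ agree with those produced by iterating the one-step free-orbit decomposition. This is where the combinatorics of shuffles—the fact that minima of fibers behave predictably under composition of order-compatible maps—must be tracked carefully to close the argument.
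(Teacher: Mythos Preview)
Your argument is correct and is essentially the standard proof of this fact, namely the one given in the cited reference \cite[Proposition 3]{DK2010}: reduce to surjections using $Q(0)=0$, observe that post-composition by $\Sigma_m$ is free on surjections $[n-1]\twoheadrightarrow[m-1]$ with each orbit containing a unique shuffle surjection, and use this to identify the $\Sigma_m$-coinvariants with the shuffle sum. The paper itself gives no proof of this lemma---it simply quotes the result from Dotsenko--Khoroshkin---so there is nothing further to compare; your sketch reproduces their argument.
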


Therefore this functor induces a forgetful functor from symmetric operads to shuffle operads.

\begin{exple} 
The usual $Lie$ symmetric operad induces a linear shuffle operad. 
\end{exple}

The fundamental property we will use later in the paper is the following: 
For $\PP$ a symmetric operad, the $\kk$-modules $\PP(n)$ and $\PP^{sh}(n)$ are the same. For instance they have the same bases.
This observation also implies that for a $\PP$-algebra $A$, the maps $\theta : P(n) \otimes A^{\otimes n} \to A$ defining 
the algebra structure can be seen as maps $P^{sh}(n) \otimes A^{\otimes n} \to A$. The same holds for maps
defining a $A$-module structure on $M$.


\vspace{2cm}
\section{Functorial constructions associated to an operad}
\subsection{Enriched category associated to an operad}

The notion of enriched category associated to an operad is quite classical, already appearing in a paper of May and Thomason \cite{MayT}.
We recall the classical definition and then extend it to the shuffle context.

\begin{defi}
\begin{itemize}
\item The linear category of pointed operator $\Gamma^\PP$ associated to a reduced \textbf{symmetric} linear operad $\PP$ is the linear category whose objects are pointed finite sets $[n]$ and such that:
$$\Gamma^\PP([n],[m])=\underset{\alpha \in \Gamma([n],[m])}{\bigoplus}\PP( \alpha^{-1}(0) ) \otimes \ldots \otimes \PP( \alpha^{-1}(m)).$$
Composition of morphisms is prescribed by symmetric operad structure maps in $\PP$.
\item The linear category of operator $\Sh^{\PP}$ associated to a reduced \textbf{shuffle} linear operad $\PP$  is the linear category whose objects are pointed  finite sets $[n]$ and such that:
$$\Sh^{\PP}([n],[m])=\underset{\alpha \in \Sh([n],[m])}{\bigoplus}\PP( \alpha^{-1}(0)) \otimes \ldots \otimes \PP( \alpha^{-1}(m)).$$
Composition of morphisms is prescribed by shuffle operad structure maps in $\PP$.
\end{itemize}
\end{defi}

\begin{rem}
Note that for a symmetric operad $\PP$ satisfying $\PP(0)=0$, we have
$$\Gamma^\PP([n],[m])\simeq \underset{\alpha \in \Gamma^{surj}([n],[m])}{\bigoplus}\PP( \alpha^{-1}(0) ) \otimes \ldots \otimes \PP( \alpha^{-1}(m))$$
and
$$\Sh^{\PP}([n],[m])\simeq \underset{\alpha \in \Sh^{surj}([n],[m])}{\bigoplus}\PP( \alpha^{-1}(0)) \otimes \ldots \otimes \PP( \alpha^{-1}(m)).$$

\end{rem}
For a shuffle operad $\PP^{sh}$ coming from a symmetric operad $\PP$, we ease notation by abbreviating $\Sh^{\PP^{sh}}$ to $\Sh^{\PP}$.

Note that $\Sh^\PP([n],[0])= \PP^{sh}([n])= \PP([n])=\PP(n+1)$.

\begin{exple}
We explain graphically how the composition works in $\Gamma^\PP_{sh}$ for $\PP$ a shuffle operad. 

Let $(\alpha,f)$ be the generator of $\Gamma^\PP_{sh}([9],[5])$ where $\alpha \in \Gamma_{sh}([9],[5])$ is given by:
$$\alpha(0)=\alpha(4)=0; \ \alpha(1)=1;\ \alpha(2)=\alpha(3)=\alpha(7)=2;\ $$
$$\alpha(5)=3;\ \alpha(6)=\alpha(9)=4;\ \alpha(8)=5$$
and 
$$f=f_0 \otimes f_1 \otimes f_2 \otimes f_3 \otimes f_4 \otimes f_5$$
 where $\forall i \in\{0,1,2,3,4,5\}\ f_i \in \PP(\alpha^{-1}(i))$. We can represent this morphism by the following picture:
$$ \vcenter{\xymatrix@M=1pt@H=0pt@R=10pt@C=14pt@!0{
 0 \ar@{-}[ddr] & & 4 \ar@{-}[ddl]& & & 1\ar@{-}[dd] &  & & 2\ar@{-}[ddrr] & & 3\ar@{-}[dd] & & 7\ar@{-}[ddll] & & & 5\ar@{-}[dd] & & & 6\ar@{-}[ddr] & & 9\ar@{-}[ddl] & & & 8 \ar@{-}[dd] \\
 &\\
 & f_0\ar@{-}[dd] & & & & f_1 \ar@{-}[dd]& & & & & f_2\ar@{-}[dd] & & & & & f_3\ar@{-}[dd] & & & & f_4 \ar@{-}[dd]& & & & f_5 \ar@{-}[dd]& \\
 &\\
 &0&&&&1&&&&&2&&&&&3&&&&4&&&&5\\
}} $$
Let $g$ be the generator of $\Gamma^\PP_{sh}([5],[2])$ represented by the following picture:
$$ \vcenter{\xymatrix@M=1pt@H=0pt@R=10pt@C=14pt@!0{
 0 \ar@{-}[ddr] & & 3 \ar@{-}[ddl]& & & 1\ar@{-}[dd] &  & & 2\ar@{-}[ddrr] & & 4\ar@{-}[dd] & & 5\ar@{-}[ddll] \\
 &\\
 & g_0\ar@{-}[dd] & & & & g_1 \ar@{-}[dd]& & & & & g_2\ar@{-}[dd] \\
 &\\
 &0&&&&1&&&&&2\\
}} $$

To compose them, we use the composition of the shuffling maps $\alpha \in \Gamma_{sh}$ and the shuffle composition of the operad $\PP$.

We obtain the following element in $\Gamma^\PP_{sh}([9],[2])$, where the dotted lines shows where the shuffle composition of $\PP$ has to be done.

$$ \vcenter{\xymatrix@M=1pt@H=0pt@R=10pt@C=14pt@!0{
0\ar@{-}[ddr] && 4\ar@{-}[ddl] && 5\ar@{-}[dd] &&& 1\ar@{-}[dd] &&& 2\ar@{-}[ddrr] && 3\ar@{-}[dd] && 7\ar@{-}[ddll] && 6\ar@{-}[ddr] && 9\ar@{-}[ddl] && 8\ar@{-}[dd] \\
&\\
& f_0 \ar@{.}[ddrr] &&& f_3\ar@{.}[ddl] &&& f_1 \ar@{.}[dd]&&&&& f_2\ar@{.}[ddrrrrr] &&&&& f_4\ar@{.}[dd] &&& f_5\ar@{.}[ddlll] \\
&\\
&&& g_0\ar@{-}[dd] &&&& g_1\ar@{-}[dd] &&&&& &&&&& g_2\ar@{-}[dd] \\
&\\
 &&&0&&&&1&&&&&&&&&&2&&&&&&&\\
}}$$

This element can also be written as

$$ \vcenter{\xymatrix@M=1pt@H=0pt@R=10pt@C=14pt@!0{
0\ar@{-}[ddrr] && 4\ar@{-}[dd] && 5\ar@{-}[ddll] &&& 1\ar@{-}[dd] &&& 
2\ar@{-}[ddrrrrr] && 3\ar@{-}[ddrrr] && 6\ar@{-}[ddr] && 7\ar@{-}[ddl] && 8\ar@{-}[ddlll] && 9 \ar@{-}[ddlllll]  \\
& \\
&& h_0\ar@{-}[dd] &&&&& h_1\ar@{-}[dd] &&&&& &&& h_2\ar@{-}[dd] \\
& \\
 &&0&&&&&1&&&&&&&&2&&&&&&&&&\\
}}$$
where the element $h_2 \in \PP (\{2,3,6,7,8,9\})$ is given by the composition $g_2 \circ (f_2, f_4, f_5)$ 
associated to the shuffling map $\alpha: \{2,3,6,7,8,9\} \to \{0,1,2\}$ defined by $\alpha^{-1}(0)=\{2,3,7\}$, $\alpha^{-1}(1)=\{6,9\}$ and 
$\alpha^{-1}(2)=\{8\}$. The elements $h_0$ and $h_1$ are defined in a similar way.

\end{exple}

For $\PP=Lie$, there are some particular maps in $\Gamma^{Lie}$ and $\Sh^{Lie}$ which are central in our later constructions. 

\begin{defi} \label{dij}
For every $n \geq0$ and for $0\leq i <j \leq n+1 $, 
there is one element in  $\Sh^{Lie}([n+1],[n])$ called $d_{i,j}$ associated to the single map $f \in\Sh ([n+1],[n])$ sending $i$ and $j$ to $i$,
where the element in $Lie(1)$ is the identity and the element $\mu$ in $Lie(2)$ is the $Lie$ bracket.
$$ \vcenter{\xymatrix@M=2pt@H=0pt@R=16pt@C=14pt@!0{
 0\ar@{-}[dd]& & 1\ar@{-}[dd] & & \ldots\ar@{-}[dd] & & i\ar@{-}[dr] & & j\ar@{-}[dl] 
& & i+1\ar@{-}[dd] & & \ldots\ar@{-}[dd] & & \hat j&  & \ldots\ar@{-}[dd] & & n+1\ar@{-}[dd] \\
& &  & &  & & & \mu\ar@{-}[d] & & &  & &   & & &  &  & &  \\
0& & 1 & & \ldots & & & i & & & i+1 & & \ldots  & & &  & \ldots & & n  }} $$
\end{defi}

\begin{rem}
These elements $d_{i,j}$ are actually generators (via linear combination and composition) of $\bigoplus_{n,m}\GLie([n],[m])$.
\end{rem}

\begin{rem} \label{linear}
If $S$ is a symmetric set operad, we consider the corresponding linear symmetric operad $\kk S$ defined as $\kk S(n):=\kk[S(n)]$ with $\kk[-]$ denoting the free $\kk$-module. The $\kk$-linearization of a category $\C$ , which will be denoted by $\kk[\C]$, is the $\kk$-linear category which has the same objects as $\C$ and whose sets of morphisms $\kk[\C](c,c')$ are the $\kk$-modules $\kk[\C(c,c')]$ generated by the sets of morphisms of $\C$.

It is easy to check that $\Gamma^{\kk S} = \kk [\Gamma(S)]$, where $\Gamma(S)$ denotes the category associated to $S$.

For example, $\Gamma^{uCom}=\kk[\Gamma]$ and $\Gamma^{Com}=\kk[\Gamma^{surj}]$, where $Com$ and $uCom$ are the symmetric linear operad defined in Example \ref{uCom}.

Furthermore, for $F: \C \to \kkm$ and $G: \D^{op} \to \kkm$  and $\kk[F]: \kk[\C] \to \kkm$ and $\kk[G]: \kk[\D^{op}] \to \kkm$ their linearizations, we have:
$$Tor^{\C}(F,G) \simeq Tor^{\kk[\C]}(\kk[F], \kk[G]).$$
\end{rem}

\begin{obs} \label{projectifs}
Let $\PP$ be a symmetric operad.
By the enriched Yoneda lemma, the representable functors $\Gamma^\PP([n],-)$ for $n \geq 0$ are projective generators of the category of $\kk$-linear functors from $\Gamma^\PP$ to $\kkm$. 
Similarly, the representable functors $P_n= \Gamma^\PP_{sh}([n],-)$ for $n \geq 0$ are projective generators of the category of linear functors from $\Gamma^\PP_{sh}$ to $\kkm$.

Notice that for $G$ a $\kk$-linear functor from $(\Gamma^\PP)^{op}$ to $\kkm$ and $F$ a $\kk$-linear functor from $\Gamma^\PP$ to $\kkm$, we have:
$$G \underset{\Gamma^\PP}{\otimes} \Gamma^\PP([n],-) \simeq G([n]); \qquad \Gamma^\PP(-,[n]) \underset{\Gamma^\PP}{\otimes} F \simeq F([n]).$$
\end{obs}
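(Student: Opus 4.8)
The plan is to derive everything from the enriched Yoneda lemma already available in the excerpt, applied to the $\kkm$-categories $\Gamma^\PP$ and $\Gamma^\PP_{sh}$ (both are small and enriched over $(\kkm,\otimes_\kk,\kk)$, so the lemma applies verbatim). The whole statement is the enriched (co-)Yoneda package, split into a projectivity/generation half and a tensor-product half, and the shuffle case is word-for-word the symmetric one.

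For the projectivity and generation claim, I would fix $[n]$ and read off from the enriched Yoneda lemma the natural isomorphism $\kkm\text{-Nat}(\Gamma^\PP([n],-),F)\simeq F([n])$, natural in the linear functor $F$. In other words $\kkm\text{-Nat}(\Gamma^\PP([n],-),-)$ is the evaluation functor $\mathrm{ev}_{[n]}$. Since kernels, cokernels and images in the functor category $\Gamma^\PP$-mod are computed objectwise, $\mathrm{ev}_{[n]}$ is exact, whence $\Gamma^\PP([n],-)$ is projective. For generation I would note that the same isomorphisms show the functors $\mathrm{ev}_{[n]}$, $n\geq 0$, to be jointly faithful (an enriched natural transformation vanishes iff each component $\sigma_{[n]}\colon \kk\to\kkm(F([n]),F'([n]))$ vanishes), equivalently that for every $F$ one assembles, via Yoneda, a canonical epimorphism $\bigoplus_{n\geq 0}\bigoplus_x \Gamma^\PP([n],-)\twoheadrightarrow F$, the inner sum ranging over a generating set $\{x\}$ of the $\kk$-module $F([n])$ and the $x$-summand being the map corresponding to $x$. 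This exhibits $\{\Gamma^\PP([n],-)\}_n$, and likewise $\{P_n=\Gamma^\PP_{sh}([n],-)\}_n$, as a set of projective generators.

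For the two tensor identities I would run the co-Yoneda computation on the defining coequalizer of $G\underset{\Gamma^\PP}{\otimes}\Gamma^\PP([n],-)$. Unwinding the maps of the definition, $U$ is the right-module action $G(d')\otimes\Gamma^\PP(d,d')\to G(d)$ tensored with the identity on $\Gamma^\PP([n],d)$, while $B$ is postcomposition $\Gamma^\PP(d,d')\otimes\Gamma^\PP([n],d)\to\Gamma^\PP([n],d')$ in $\Gamma^\PP$. I would then introduce the candidate map $\varepsilon\colon\bigoplus_d G(d)\otimes\Gamma^\PP([n],d)\to G([n])$, $g\otimes\phi\mapsto G(\phi)(g)$, where $\phi\colon[n]\to d$ and $G(\phi)\colon G(d)\to G([n])$; contravariant functoriality of $G$ gives $\varepsilon U=\varepsilon B$, so $\varepsilon$ descends to $\bar\varepsilon$ on the coequalizer $Q$. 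The inverse is $\iota\colon G([n])\to Q$, $g\mapsto[g\otimes\mathrm{id}_{[n]}]$. Then $\bar\varepsilon\iota=\mathrm{id}$ because $G(\mathrm{id})=\mathrm{id}$, and $\iota\bar\varepsilon=\mathrm{id}$ by applying the coequalizer relation with $\psi=\phi$ and inner factor $\mathrm{id}_{[n]}$, which identifies $[g\otimes\phi]_d$ with $[G(\phi)(g)\otimes\mathrm{id}_{[n]}]_{[n]}$. The second identity $\Gamma^\PP(-,[n])\underset{\Gamma^\PP}{\otimes}F\simeq F([n])$ is the mirror computation, interchanging the roles of the left and right module, handled by the same recipe.

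The conceptual content is entirely the enriched (co-)Yoneda lemma, so there is no genuine obstacle; the one place demanding care is the bookkeeping in the tensor-product half, namely keeping straight which of the two coequalizer maps acts on the right module $G$ (through $U$, via contravariance) and which acts on the representable left module (through $B$, as composition in $\Gamma^\PP$). Once the variances are pinned down, checking that $\bar\varepsilon$ and $\iota$ are mutually inverse is a direct use of the unit and associativity axioms of the enriched category together with the defining coequalizer relation.
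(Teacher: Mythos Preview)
Your proposal is correct and follows exactly the approach the paper indicates: the paper does not give a separate proof of this observation but simply invokes the enriched Yoneda lemma in the statement itself, treating both the projectivity/generation claim and the tensor identities as standard consequences. Your write-up is a faithful unpacking of that one-line justification, with the co-Yoneda computation for the tensor identities made explicit.
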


\subsection{Loday functor}

We recall the definition of the Loday functor, appearing in \cite{Loday-Cyclic}.
This functor appears in various contexts (associative or commutative algebras, pointed or unpointed sets)

Let $A$ be a unital commutative algebra and $M$ an $A$-module.
The Loday functor $\LL(A,M): \Gamma \to \kkm$ is defined by 
$\LL(A,M)([n])= M \otimes A^{\otimes n}$
 and for $f$ an element of $\Gamma([n],[m])$ 
the induced map $f_*:\LL(A,M)([n]) \to \LL(A,M)([m])$ is given by
$$f_*(a_0 \otimes a_1 \otimes \ldots \otimes a_n)=b_0 \otimes \ldots \otimes  b_m$$
where $b_i= \underset{j \in f^{-1}(i)}{\prod} a_j$ or $b_i=1$ if $f^{-1}(i)=\emptyset$.

We first extend this definition for algebras over a symmetric operad, 
and then see how it can be adapted in the shuffle context.

Let $\PP$ be a symmetric operad, $A$ be a $\PP$-algebra and $M$ a $A$-module. Let us call $\theta$ the structure maps.

\begin{lm}
There exists a linear functor:
$$\LL^\PP(A,M):\Gamma^\PP \to \kkm$$
defined on an object $[n]$ by: 
$$\mathcal{L}^\PP(A,M)([n])=M \otimes A^{\otimes n}$$
and on a generator $(\alpha,f)$ of $\Gamma^\PP([n],[m])$ where $\alpha: [n] \to [m]$ and $f=f_{0} \otimes \ldots \otimes f_m$ (with $f_i \in \PP(\alpha^{-1}(i))$), by: 
$$\mathcal{L}^\PP(A,M)(\alpha,f)(a_0 \otimes a_1 \otimes \ldots \otimes a_n)=b_0 \otimes \ldots \otimes  b_m$$
where $b_i=\theta ( f_i \otimes \underset{j \in \alpha^{-1}(i)}{\bigotimes} a_j)$.

We extend the definition on a morphism in $\Gamma^\PP([n],[m])$ by linearity.
\end{lm}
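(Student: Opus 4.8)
The plan is to verify the four requirements for $\LL^\PP(A,M)$ to be a $\kk$-linear functor: that the assignment on morphisms is well defined (lands in the correct target), that it is $\kk$-linear, that it preserves identities, and that it respects composition. Nothing needs to be checked on objects, so the work is entirely in the morphism assignment.

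First I would check that, for a generator $(\alpha,f)$ of $\Gamma^\PP([n],[m])$, the element $b_0 \otimes \ldots \otimes b_m$ really lies in $M \otimes A^{\otimes m}$. Since $\alpha \in \Gamma([n],[m])$ is pointed we have $\alpha(0)=0$, hence $0 \in \alpha^{-1}(0)$ while every fiber $\alpha^{-1}(i)$ with $i \geq 1$ is contained in $\{1,\ldots,n\}$. Thus in the zeroth slot exactly one input, namely $a_0$, lies in $M$, and the relevant structure map $\theta$ is the $A$-module action landing in $M$, so $b_0 \in M$; in the slots $i \geq 1$ all inputs lie in $A$ and $\theta$ is the $\PP$-algebra action, so $b_i \in A$. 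Since $\theta$ is $\kk$-linear in its operadic argument, the assignment is multilinear in the components $f_i$ on each summand and we extend it additively across the different $\alpha$, which makes $\kk$-linearity on the morphism spaces automatic. Preservation of identities is then immediate: the identity of $[n]$ in $\Gamma^\PP$ is the generator $(\mathrm{id}, u \otimes \cdots \otimes u)$ with $u \in \PP(1)$ the operadic unit, and by unitality $\theta(u \otimes a_j)=a_j$, so $\LL^\PP(A,M)(\mathrm{id})$ is the identity of $M \otimes A^{\otimes n}$.

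The heart of the proof, and the main obstacle, is functoriality. Given generators $(\alpha,f) \in \Gamma^\PP([n],[m])$ and $(\beta,g) \in \Gamma^\PP([m],[p])$, their composite in $\Gamma^\PP$ is $(\beta \circ \alpha, h)$, where the underlying pointed map is the composite and each component $h_k \in \PP\bigl((\beta\circ\alpha)^{-1}(k)\bigr)$ is obtained from $g_k$ and the family $(f_i)_{i \in \beta^{-1}(k)}$ by the symmetric operad composition in $\PP$, using $(\beta\circ\alpha)^{-1}(k)=\bigsqcup_{i \in \beta^{-1}(k)}\alpha^{-1}(i)$. I would compute $\LL^\PP(A,M)(\beta,g) \circ \LL^\PP(A,M)(\alpha,f)$ applied to $a_0 \otimes \cdots \otimes a_n$ slot by slot: the $k$-th output equals $\theta\bigl(g_k \otimes \bigotimes_{i \in \beta^{-1}(k)} \theta(f_i \otimes \bigotimes_{j \in \alpha^{-1}(i)} a_j)\bigr)$, and this must be shown to agree with the $k$-th output $\theta\bigl(h_k \otimes \bigotimes_{j \in (\beta\circ\alpha)^{-1}(k)} a_j\bigr)$ of $\LL^\PP(A,M)(\beta \circ \alpha, h)$. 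This identity is precisely the associativity axiom for the action of the operad $\PP$ on the algebra $A$ (respectively the compatibility axiom for the $A$-module $M$ in the basepoint slot $k=0$), applied to the operad composition defining $h_k$.

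The only subtlety to flag is the module structure: the compatibility condition forcing $\theta$ to vanish on two or more inputs from $M$ is never actually triggered here, because $a_0$ is the unique input lying in $M$ and it always lands in the basepoint slot, since $(\beta\circ\alpha)(0)=0$ guarantees $0 \in (\beta\circ\alpha)^{-1}(0)$ and $0$ appears in no other fiber. Hence in slot $k=0$ one applies the single-$M$-input module action and in all slots $k \geq 1$ one applies the pure $\PP$-algebra action, in both cases reducing the required equality to the respective action axiom. Finally I would pass from generators to arbitrary morphisms using bilinearity of composition in $\Gamma^\PP$ together with the $\kk$-linearity established above, which completes the verification that $\LL^\PP(A,M)$ is a $\kk$-linear functor.
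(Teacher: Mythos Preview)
Your proof is correct and follows essentially the same approach as the paper: verifying that the output lies in $M\otimes A^{\otimes m}$ via the pointedness condition $\alpha(0)=0$, noting linearity, and deducing functoriality from the compatibility of the structure maps $\theta$ with the operadic composition defining the category $\Gamma^\PP$. Your version is simply more explicit than the paper's, spelling out the slot-by-slot comparison for composition and the role of the module axiom at the basepoint.
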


\begin{proof}
We need to prove first that $\mathcal{L}^\PP(A,M)(\alpha,f)$ is well-defined, and then show the functoriality.

First, note that $b_0$ is in $M$ because $a_0$ is in $M$, the other $a_j$'s are in $A$ and because $\alpha(0)=0$. Moreover, the linearity is obtained by construction.

Concerning the functoriality, recall that the composition in $\Gamma^\PP$ is defined by the operadic composition.
Recall also that the structure maps $\theta$ (for the module $M$ and the algebra $A$) are required to be compatible with the operadic composition.
This requirement gives exactly $\mathcal{L}^\PP(A,M)(id,1_\PP \otimes \ldots \otimes 1_\PP)=id$ and $\mathcal{L}^\PP(A,M)((\alpha,f)(\beta,g)) = \mathcal{L}^\PP(A,M)(\alpha,f)\mathcal{L}^\PP(A,M)(\beta,g)$.
\end{proof}

\begin{defi}
The functor $\LL^\PP(A,M): \Gamma^\PP \to \kkm$ is called the symmetric Loday functor 
associated to the $\PP$-algebra $A$ and the $A$-module $M$.
\end{defi}

\begin{rem}
For $A$ a unital commutative algebra and $M$ an $A$-module, we have $\LL^{uCom}(A,M)=\Bbbk [\LL(A,M)]$ where $uCom$ is the symmetric operad defined in Example \ref{uCom}. 
\end{rem}
\begin{rem}
For $A$ a commutative algebra and $M$ a $A$-module, by \cite{Pira-Hodge} and \cite{PiraR-2000} we have
$$H_*^{\Gamma}(A,M) \simeq Tor^{\Gamma}(t, \LL(A,M))$$
where $H_*^{\Gamma}(A,M)$ is the $\Gamma$-homology of $A$ with coefficients in $M$ and $t: \Gamma^{op} \to Ab$ is the cokernel of $\Gamma(-, [2]) \to \Gamma(-, [1])$. By the previous remark and Remark \ref{linear} this result can be rephrased in linear categories as:
$$H_*^{\Gamma}(A,M) \simeq Tor^{\Gamma^{uCom}}(\kk[t], \LL^{uCom}(A,M)).$$
\end{rem}

\bigskip

Let $\PP$ be a symmetric operad, $A$ be a $\PP$-algebra and $M$ a $A$-module. 
Recall that $\PP^{sh}$ is the shuffle operad associated to $\PP$.
Let us call $\theta$ the structure maps.

\begin{lm}
There exists a linear functor:
$$\LL^\PP_{sh}(A,M):\Gamma^\PP_{sh} \to \kkm$$
defined on an object $[n]$ by: 
$$\mathcal{L}^\PP_{sh}(A,M)([n])=M \otimes A^{\otimes n}$$
and on a generator $(\alpha,f)$ of $\Gamma^\PP_{sh}([n],[m])$ where $\alpha: [n] \to [m]$ and $f=f_{0} \otimes \ldots \otimes f_m$ (with $f_i \in \PP_{sh}(\alpha^{-1}(i))$), by: 
$$\mathcal{L}^\PP_{sh}(A,M)(\alpha,f)(a_0 \otimes a_1 \otimes \ldots \otimes a_n)=b_0 \otimes \ldots \otimes  b_m$$
where $b_i=\theta ( f_i \otimes \underset{j \in \alpha^{-1}(i)}{\bigotimes} a_j)$.
We extend the definition on a morphism in $\Gamma^\PP_{sh}([n],[m])$ by linearity.
\end{lm}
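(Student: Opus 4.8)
The plan is to follow the proof of the symmetric Loday functor almost verbatim, the only new ingredient being the passage from the symmetric operadic composition to the shuffle one. As in that case, two things must be checked: that $\LL^\PP_{sh}(A,M)(\alpha,f)$ is well-defined, and that the assignment is functorial.

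First I would verify well-definedness. The component $b_0=\theta(f_0 \otimes \bigotimes_{j\in\alpha^{-1}(0)} a_j)$ lands in $M$: since $\alpha(0)=0$ the index $0$ belongs to $\alpha^{-1}(0)$, so the factor $a_0\in M$ is among those entering $b_0$, while each remaining $b_i$ with $i\geq 1$ involves only factors from $A$ and hence lies in $A$. Here I use the identification $\PP(n)=\PP^{sh}(n)$ to read each structure map as $\theta\co \PP^{sh}(\alpha^{-1}(i))\otimes A^{\otimes|\alpha^{-1}(i)|}\to A$, as recorded in the fundamental observation preceding this subsection. Linearity in $f$ and in $a_0\otimes\cdots\otimes a_n$ is immediate from the defining formula, so the map is well-defined.

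Next I would prove functoriality. Preservation of identities is formal and identical to the symmetric case: the identity of $[n]$ is $(\mathrm{id},1_\PP\otimes\cdots\otimes 1_\PP)$, and unitality of $\theta$ gives $\LL^\PP_{sh}(A,M)(\mathrm{id},1_\PP\otimes\cdots\otimes 1_\PP)=\mathrm{id}$. For composites, recall that composition in $\Gamma^\PP_{sh}$ is prescribed by the shuffle structure maps of $\PP^{sh}$. The hard part will be to see that the compatibility of $\theta$ with composition, which is part of the data of a $\PP$-algebra and an $A$-module, still applies once we replace the symmetric composition by the shuffle one. The point is that, under $\PP(n)=\PP^{sh}(n)$, the shuffle composition maps are obtained from the symmetric composition maps by restriction along shuffling surjections; this is precisely the monoidality of the forgetful functor $(-)^{sh}$ (the Dotsenko--Khoroshkin lemma). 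Consequently the known compatibility of $\theta$ with the symmetric operadic composition specializes, term by term, to the compatibility needed here, yielding $\LL^\PP_{sh}(A,M)((\alpha,f)(\beta,g))=\LL^\PP_{sh}(A,M)(\alpha,f)\,\LL^\PP_{sh}(A,M)(\beta,g)$.

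The only genuine obstacle is thus this last specialization: one must make sure that the shuffle structure maps really arise as restrictions of the symmetric ones, so that no permutation twist or extra relation has to be checked separately against $\theta$. Once the monoidality of $(-)^{sh}$ is invoked this is automatic, and the shuffle case requires no computation beyond the symmetric one already treated.
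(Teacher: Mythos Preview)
Your proposal is correct and follows essentially the same line as the paper: the paper simply says that the proof is the same as in the symmetric case, the only difference being the shuffle context, and invokes the fact that the symmetric structure maps induce shuffle structure maps compatible with the shuffle operadic composition. Your argument spells this out in slightly more detail (well-definedness of $b_0\in M$, preservation of identities, and the appeal to the monoidality of $(-)^{sh}$ for composites), but the approach is identical.
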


\begin{proof}
The proof is mostly the same as in the symmetric case. The only difference comes from the shuffle context.
Recall that the structures maps in the symmetric context induce structures maps in the shuffle context.
These induced maps are compatible with the shuffle operadic composition.
\end{proof}

\begin{defi}
The functor $\LL^\PP_{sh}(A,M): \Gamma^\PP_{sh} \to \kkm$ is called the shuffle Loday functor
 associated to the $\PP$-algebra $A$ and the $A$-module $M$.
\end{defi}


\vspace{2cm}
\section{Leibniz homology of a functor}
Leibniz homology of a Lie algebra was defined by Loday.
In this section, we first recall the usual Leibniz complex of a Lie algebra, and then generalize it to define the Leibniz homology of a $\GLie$-module. In fact, Leibniz homology of $\LL^{Lie}_{sh}(A,M)$ is Leibniz homology of the Lie algebra $A$ with coefficients in $M$. Then we state the main result of this paper.

\subsection{Leibniz complex}
Let $A$ be a Lie algebra and $M$ a $A$-module.

\begin{defi} \cite{Loday-Cyclic}
The Leibniz homology of $A$ with coefficients in $M$, denoted by $HL_*(A,M)$, is the homology of the complex
$(C^{Leib}_* (A,M),d)$ where 
$C^{Leib}_n (A,M)= M \otimes A^{\otimes n}$ and the differential $d: C^{Leib}_n (A,M) \to C^{Leib}_{n-1} (A,M)$ is given by:
$$d(x \otimes a_1 \otimes \ldots \otimes a_n)= \sum_{1\leq i<j \leq n } (-1)^j x\otimes a_1 \otimes \ldots a_{i-1} \otimes [a_i,a_j]\otimes \ldots \otimes \widehat{a_j} \otimes \ldots \otimes a_n$$
$$+ \sum_{1 \leq j \leq n} (-1)^j [x,a_j] \otimes a_1 \otimes \ldots \otimes \widehat{a_j} \otimes \ldots \otimes a_n.$$
\end{defi}

\begin{rem}
\begin{itemize}
\item
If we replace $x$ by the symbol $a_0$, we can notice that:
$$d(a_0 \otimes a_1 \otimes \ldots \otimes a_n)= \sum_{0\leq i<j \leq n} (-1)^j a_0\otimes a_1 \otimes \ldots a_{i-1} \otimes [a_i,a_j]\otimes \ldots \otimes \widehat{a_j} \otimes \ldots \otimes a_n$$
Note that $d^2=0$ is proved in the same way as for the Chevalley-Eilenberg complex. 
\item Leibniz homology and Chevalley-Eilenberg homology are related by a map $H^{Leib}_* (A,M) \to H^{CE}_* (A,M)$ induced by the anti-symmetrisation map $C^{Leib}_* (A,M) \to C^{CE}_* (A,M)$. 
\end{itemize}
\end{rem}

\begin{prop}
Let $T$ be  a linear functor from $\GLie$ to $\kkm$. The map $d: T([n]) \to T([n-1])$ defined by $d=T(\sum_{0\leq i < j \leq n} (-1)^j d_{i,j})$ satisfies $d^2=0$. Therefore the sequence:
$$\ldots \xrightarrow{d}T([n])\xrightarrow{d} T([n-1])  \xrightarrow{d} \ldots$$
is a well-defined complex, denoted by  $(C^{Leib}_* (T),d)$.
\end{prop}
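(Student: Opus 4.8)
The plan is to reduce the identity $d^2=0$ to a single relation between morphisms in $\GLie$ and then to establish that relation from the combinatorics of the shuffle composition together with the defining relations of the operad $Lie$. Since $T\co\GLie\to\kkm$ is a linear functor, it preserves $\kk$-linear combinations and composition of morphisms; hence
$$d^2=T\Big(\big(\sum_{0\le k<l\le n-1}(-1)^l d_{k,l}\big)\circ\big(\sum_{0\le i<j\le n}(-1)^j d_{i,j}\big)\Big),$$
and it suffices to prove that the composite
$$\Delta_n:=\big(\sum_{0\le k<l\le n-1}(-1)^l d_{k,l}\big)\circ\big(\sum_{0\le i<j\le n}(-1)^j d_{i,j}\big)$$
vanishes in $\GLie([n],[n-2])$. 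Expanding, $\Delta_n=\sum(-1)^{j+l}\, d_{k,l}\circ d_{i,j}$, so the task is to compute each composite $d_{k,l}\circ d_{i,j}$ and to exhibit the required cancellations.

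Each composite $d_{k,l}\circ d_{i,j}$ is computed by the composition law of $\GLie$, namely composition of the underlying shuffling surjections together with the shuffle composition in $Lie$. Concretely, $d_{i,j}$ merges the inputs $i$ and $j$ into a single slot placed at position $i$ (carrying the bracket $\mu\in Lie(2)$) and relabels the remaining slots order-preservingly; the second map $d_{k,l}$ then acts on the resulting $[n-1]$ slots. I would first make this relabelling explicit, recording how the pair $(k,l)$ in $[n-1]$ corresponds to a pair of original inputs, and then split the sum according to whether the second bracket touches the merged slot or not.

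In the \emph{disjoint} case, where $\{k,l\}$ (after relabelling) meets two slots distinct from the merged one, the result is a morphism performing two brackets on four distinct inputs; each such configuration is produced by exactly two ordered pairs $((i,j),(k,l))$ --- performing either bracket first --- and the signs $(-1)^{j+l}$, combined with the index shift induced by the first bracket, are opposite, so the two contributions cancel. This is the same bookkeeping as in the Chevalley--Eilenberg / Leibniz complex, now carried out at the level of generators of $\GLie$. In the \emph{nested} case, where the second bracket involves the merged slot, the relevant terms group into families indexed by a triple of inputs, yielding (after using the antisymmetry of $\mu$ to put the brackets in a common form) the three summands of the Jacobi relation in $Lie(3)$; these cancel because $Lie$ satisfies Jacobi. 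I expect this nested case to be the main obstacle: it requires a careful use of a basis of $Lie$ and of the way shuffle composition rewrites iterated brackets, and --- since we work over an arbitrary ground ring, including characteristic $2$ where only antisymmetry of $\mu$ is available --- the cancellation must be produced by Jacobi and antisymmetry alone, without appealing to $[x,x]=0$. Once both cases are checked one gets $\Delta_n=0$, hence $d^2=T(\Delta_n)=0$, and $(C^{Leib}_*(T),d)$ is a well-defined complex.
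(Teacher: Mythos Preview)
Your proposal is correct and is precisely the approach the paper takes: the paper's proof is the single sentence ``The relation $d^2=0$ is obtained by exactly the same computation as $d^2=0$ for the Leibniz complex of an algebra,'' and what you have written is an explicit outline of that computation, carried out at the level of the generators $d_{i,j}$ in $\GLie$ rather than on elements of a particular algebra. The disjoint/nested dichotomy and the use of the Jacobi relation (equivalently, the Leibniz relation via antisymmetry) for the nested triples is exactly the classical argument.
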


\begin{proof}
The relation $d^2=0$ is obtained by exactly the same computation as $d^2=0$ for the Leibniz complex of an algebra.
\end{proof}

\begin{defi}
Let $T$ be  a linear functor from $\GLie$ to $\kkm$.
The Leibniz homology of $T$, denoted by $H^{Leib(T)}_*$, is the homology of the complex $(C^{Leib}_* (T),d)$. Leibniz homology produces a functor from left $\GLie$-modules to $\kkmg$ called $H^{Leib}_*$.
\end{defi}

\begin{rem}
If one chooses the Loday functor for $T$, the definition coincides with the previous one: $(C_*^{Leib} (\LL^{Lie}_{sh}(A,M)),d)=(C_*^{Leib}(A,M),d)$.
\end{rem}


\subsection{Leibniz homology as functor homology}

Before stating our theorem, we first need to define a right $\GLie$-module, which will serve as basepoint.

\begin{defi}
The contravariant linear functor $t$ from $\GLie$ to $\kkm$ is defined by $\text{coker}\big(\GLie(-,[1]) \xrightarrow{(d_{0,1})_*} \GLie(-,[0])\big)$,
where $-_*$ denotes the postcomposition and $d_{0,1} \in \GLie([1],[0])$ is defined in Definition \ref{dij}.
\end{defi}

It is easy to check that $(\text{Im } (d_{0,1})_*)[n] =\GLie([n],[0])$ for $n\geq 1$ and  $(\text{Im } (d_{0,1})_*)[0]=0$.
Therefore $t([n])=0$ for $n\geq 1$ and $t[0]=\GLie([0],[0])=\kk$.

We can now state the main theorem of the paper:

\begin{thm}
Let $T$ be a left $\GLie$-module. Then 
$$H^{Leib}_*(T) = Tor_* ^\GLie (t, T).$$
\end{thm}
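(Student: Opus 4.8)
The plan is to verify the three hypotheses of Proposition \ref{char-homo} for the functor $H_*^{Leib}$ applied with $G = t$, since this proposition provides exactly the characterization we need: once all three conditions are checked, we conclude $H_*^{Leib}(T) \simeq Tor_*^{\GLie}(t,T)$ naturally. Thus the proof reduces to establishing, for the functor $H_*^{Leib}$ on left $\GLie$-modules, that (1) it carries short exact sequences to long exact sequences, (2) its degree-zero part is naturally isomorphic to $t \otimes_{\GLie} -$, and (3) it vanishes in positive degrees on projective $\GLie$-modules.

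I would handle the first two conditions quickly. For condition (1), the complex $(C_*^{Leib}(T),d)$ is degreewise given by the evaluations $T([n])$, and the differential $d = T(\sum_{0 \le i < j \le n}(-1)^j d_{i,j})$ is natural in $T$; since a short exact sequence of $\GLie$-modules gives a degreewise short exact sequence of complexes (evaluation at $[n]$ is exact), the associated long exact homology sequence is immediate. For condition (2), I would compute $H_0^{Leib}(T) = \mathrm{coker}(T([1]) \xrightarrow{T(d_{0,1})} T([0]))$ and identify this cokernel with $t \otimes_{\GLie} T$. Here I would use the description of $t$ as $\mathrm{coker}(\GLie(-,[1]) \xrightarrow{(d_{0,1})_*} \GLie(-,[0]))$ together with right-exactness of $-\otimes_{\GLie}-$ (Proposition \ref{exact}) and the Yoneda-type identities $\GLie(-,[n]) \otimes_{\GLie} T \simeq T([n])$ from Observation \ref{projectifs}: applying the right-exact functor $-\otimes_{\GLie} T$ to the presentation of $t$ transforms it into precisely the cokernel presentation of $H_0^{Leib}(T)$.

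The genuinely hard condition is (3): the vanishing of $H_i^{Leib}(P_n) = 0$ for $i>0$ on the projective generators $P_n = \GLie([n],-)$. By Observation \ref{projectifs} these are projective generators, and since $H_*^{Leib}$ commutes with direct sums and every projective is a summand of a sum of the $P_n$, it suffices to treat $P_n$. This requires showing the Leibniz complex $(C_*^{Leib}(\GLie([n],-)),d)$, whose term in degree $m$ is $\GLie([n],[m])$, is acyclic in positive degrees. As flagged in the introduction, this is the heart of the paper. The strategy I would follow is the three-step plan sketched there: first, write down an explicit basis of the morphism spaces $\GLie([n],[m])$ in terms of the generators $d_{i,j}$ and the basis of the operad $Lie$; second, use this basis to define a filtration on the complex and check the differential respects it, which demands careful control of how the shuffle composition in $Lie$ interacts with the maps $d_{i,j}$; third, identify the associated graded complex with a direct sum of small acyclic complexes, so that a spectral-sequence or direct filtration argument yields acyclicity of the total complex. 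I expect the compatibility of the filtration with $d$ and the combinatorial identification of the graded pieces to be the main obstacle, since these rely on the fine structure of the $Lie$ operad basis under shuffle composition rather than on formal homological algebra.

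Having verified conditions (1)--(3), the conclusion of Proposition \ref{char-homo} gives the natural isomorphism $H_*^{Leib}(T) \simeq Tor_*^{\GLie}(t,T)$ for all $T$, which is the desired statement.
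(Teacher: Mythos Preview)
Your proposal is correct and follows essentially the same approach as the paper: verify the three hypotheses of Proposition~\ref{char-homo}, handling (1) by the naturality of the Leibniz complex, (2) by applying $-\otimes_{\GLie}T$ to the presentation of $t$ and invoking right-exactness plus Observation~\ref{projectifs}, and deferring (3) to the detailed basis/filtration/graded-complex argument carried out in Section~4. Your description of the three-step strategy for the vanishing on projective generators matches the paper's outline precisely.
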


To prove this theorem, we use the characterization of homology theories given in \ref{char-homo},
which relies on three hypotheses.

The first hypothesis ($H^{Leib}_*$ sending short exact sequences to long exact sequences) is satisfied,
as $H^{Leib}_*$ is defined via the homology of a complex.

The functor $t$ has been defined in a way to satisfy the second hypothesis (concerning $H_0$). 

\begin{prop}
For all left $\GLie$-modules $T$, we have 
$$H_0^{Leib}(T)=t \otimes_{\Sh^{Lie}} T.$$
\end{prop}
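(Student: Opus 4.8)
The plan is to identify both sides with the same cokernel. I would first unravel the left-hand side. By definition $H_0^{Leib}(T)$ is the zeroth homology of the complex $(C_*^{Leib}(T),d)$, hence the cokernel of the bottom differential $d\colon T([1])\to T([0])$. At this spot the differential $d=T\big(\sum_{0\leq i<j\leq 1}(-1)^j d_{i,j}\big)$ has a single summand, corresponding to $(i,j)=(0,1)$, so $d=-T(d_{0,1})$. Since the cokernel is insensitive to the global sign, this gives
$$H_0^{Leib}(T)=\text{coker}\big(T(d_{0,1})\colon T([1])\to T([0])\big).$$

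Next I would compute the right-hand side from the defining presentation of $t$. Recalling that $\Sh^{Lie}=\GLie$, the definition of $t$ furnishes an exact sequence of right $\GLie$-modules
$$\GLie(-,[1])\xrightarrow{(d_{0,1})_*}\GLie(-,[0])\to t\to 0.$$
Because $-\otimes_{\GLie}T$ is right exact (Proposition \ref{exact}), applying it yields the right-exact sequence
$$\GLie(-,[1])\otimes_{\GLie}T\to \GLie(-,[0])\otimes_{\GLie}T\to t\otimes_{\GLie}T\to 0.$$
I would then invoke Observation \ref{projectifs}, which supplies natural isomorphisms $\GLie(-,[n])\otimes_{\GLie}T\simeq T([n])$, identifying the two leftmost terms with $T([1])$ and $T([0])$.

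The one point requiring care is that, under these identifications, the induced map becomes exactly $T(d_{0,1})$. Tracing through the Yoneda isomorphism, which sends an elementary tensor $\phi\otimes x$ (with $\phi\in\GLie(X,[n])$ and $x\in T(X)$) to $T(\phi)(x)\in T([n])$, the postcomposition map $(d_{0,1})_*$ carries $\phi\otimes x$ to $(d_{0,1}\circ\phi)\otimes x$, which maps to $T(d_{0,1}\circ\phi)(x)=T(d_{0,1})\big(T(\phi)(x)\big)$. Hence the induced map is $T(d_{0,1})$, and therefore $t\otimes_{\GLie}T=\text{coker}\big(T(d_{0,1})\big)$, which agrees with the expression for $H_0^{Leib}(T)$ obtained above.

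The main obstacle is this last naturality verification: that the isomorphism of Observation \ref{projectifs} intertwines postcomposition by $d_{0,1}$ on representable modules with the action $T(d_{0,1})$. This is precisely the naturality in the representing object of the enriched Yoneda isomorphism, so no genuine difficulty arises, only the bookkeeping above. The rest of the argument is formal, the only computational input being the identification of the bottom differential of the Leibniz complex with $\pm T(d_{0,1})$.
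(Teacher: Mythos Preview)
Your proof is correct and follows essentially the same approach as the paper: both arguments tensor the defining presentation of $t$ with $T$, invoke right exactness (Proposition~\ref{exact}) and the Yoneda identifications of Observation~\ref{projectifs}, and recognise the resulting cokernel as $H_0^{Leib}(T)$. Your version is slightly more explicit in two places the paper leaves implicit --- the identification of the bottom differential with $\pm T(d_{0,1})$ and the naturality check that postcomposition by $d_{0,1}$ corresponds to $T(d_{0,1})$ under Yoneda --- but there is no genuine difference in strategy.
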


\begin{proof}
We follow the ideas of the proof of Theorem 1.3 of \cite{PiraR-2002}. 
By the definition of $t$, we have an exact sequence:
$$\GLie(-,[1]) \xrightarrow{(d_{0,1})_*} \GLie(-,[0]) \to t \to 0.$$
For $T$ a left $\GLie$-module, by Proposition \ref{exact} the functor $- \underset{\GLie}{\otimes}T$ is right exact, so we deduce the following exact sequence:
$$\GLie(-,[1]) \underset{\GLie}{\otimes}T \to \GLie(-,[0]) \underset{\GLie}{\otimes}T \to t\underset{\GLie}{\otimes}T \to 0.$$
By Observation \ref{projectifs}, $\GLie(-,[1]) \underset{\GLie}{\otimes}T \simeq T([1])$ and $\GLie(-,[0]) \underset{\GLie}{\otimes}T \simeq T([0])$ so:
$$t\underset{\GLie}{\otimes}T  \simeq T([0])/Im\big(T([1]) \to T([0])\big)=H_0^{Leib}(T).$$

\end{proof}

The third hypothesis consists in the vanishing of the homology of the projective generators.
The proof of this part is detailed in the remaining section of the paper.

\vspace{2cm}
\section{Vanishing of the homology of the projective generators}

The goal of this section is to prove the following proposition.
\begin{prop} \label{Leib-projectifs}
For $i>0$ and $n \geq 0$,
$$H_i^{Leib}(P_n)=0$$
where $P_n=\GLie([n],-)$.
\end{prop}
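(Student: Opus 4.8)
The plan is to prove directly that the complex $C^{Leib}_*(P_n)$ is acyclic in positive degrees. Its term in degree $m$ is $P_n([m]) = \GLie([n],[m])$ and, since $P_n = \GLie([n],-)$ is covariant, its differential is postcomposition with $\sum_{0 \le i < j \le m} (-1)^j d_{i,j}$, where the $d_{i,j} \in \GLie([m],[m-1])$ are the maps of Definition \ref{dij}. No direct contracting homotopy is available, because the Lie bracket appearing in the $d_{i,j}$ cannot be undone; instead I would filter the complex and prove acyclicity of the associated graded, following the three-step scheme announced in the introduction.

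First I would fix convenient bases. A basis of $\GLie([n],[m]) = \underset{\alpha \in \Sh^{surj}([n],[m])}{\bigoplus} Lie(\alpha^{-1}(0)) \otimes \cdots \otimes Lie(\alpha^{-1}(m))$ consists of pairs $(\alpha,f)$ with $\alpha$ a shuffling surjection — equivalently an ordered partition of $\{0,\dots,n\}$ into blocks $B_0 \ni 0, B_1, \dots, B_m$ ordered by increasing minimum — together with a basis element $f_k$ of $Lie(B_k)$ on each block. For the latter I would use the left-normed (Dynkin) basis rooted at $\min B_k$, recording each $f_k$ as a word beginning with $\min B_k$. In these terms, postcomposition with $d_{i,j}$ merges the blocks $B_i$ and $B_j$ (with $\min B_i < \min B_j$, since $i<j$ and $\alpha$ is a shuffling surjection) into a single block carrying the bracket $[f_i,f_j]$, and then rewrites $[f_i,f_j] \in Lie(B_i \cup B_j)$ in the chosen basis via the shuffle composition of $Lie$.

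Next I would introduce a filtration of $C^{Leib}_*(P_n)$ by a numerical statistic of the basis elements, chosen so that each elementary merge $d_{i,j}$ contributes one distinguished \emph{leading} basis element together with correction terms of strictly smaller filtration. The substance of this step is the compatibility of the filtration with the differential: one must show that when the bracket $[f_i,f_j]$ of two left-normed monomials is re-expanded, the leading monomial preserves the filtration degree while every other monomial produced by the Jacobi/Leibniz rewriting drops it strictly. This is exactly where a precise understanding of the basis of the operad $Lie$ and of its behaviour under shuffle composition is needed, and it is the point at which the order-preserving (shuffle) condition built into $\GLie$ intervenes — the same condition responsible for producing Leibniz rather than Chevalley--Eilenberg homology. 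I expect this compatibility to be the main obstacle.

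Finally, on the associated graded the differential retains only the leading parts of the merges; since each of these is a single basis element, the graded differential is a purely combinatorial operation on the decorated partitions (and it automatically squares to zero, being the graded piece of a differential). I would then decompose the graded complex as a direct sum of subcomplexes indexed by an invariant preserved by the graded differential, and identify each summand with an acyclic complex — for instance the augmented simplicial chain complex of a simplex, or a complex carrying an explicit contracting homotopy obtained by splitting off a single letter. Acyclicity in positive degrees of the associated graded then transfers to $C^{Leib}_*(P_n)$ by the standard comparison for filtered complexes; the filtration is bounded because $\GLie([n],[m]) = 0$ for $m>n$, so no convergence issues arise, and this yields $H^{Leib}_i(P_n) = 0$ for $i>0$.
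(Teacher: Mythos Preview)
Your outline matches the paper's proof almost exactly: the paper uses the same left-comb (Dynkin) basis, encodes basis elements as split tuples $(B_0\mid\cdots\mid B_m)$, filters $\bigoplus_m\GLie([n],[m])$ by the underlying tuple $p(B_0\mid\cdots\mid B_m)=B_0B_1\cdots B_m\in Tuple(n)$ under the lexicographic order, and identifies each graded piece $gr_u$ with the chain complex of the simplex on the set of admissible cut positions of $u$.

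One refinement of your expectation: it is \emph{not} the case that every $(d_{i,j})_*$ contributes a leading term that stays in the same graded piece. Only the adjacent merges $(d_{i,i+1})_*$ do; for $j>i+1$ even the smallest monomial appearing in the rewriting of $[f_i,f_j]$ --- namely the concatenation $B_iB_j$ placed in slot $i$ --- already has a different underlying tuple, since the first element of $B_{i+1}$ is smaller than the first element of $B_j$. Hence $(d_{i,j})_*$ vanishes identically on $gr_u$ whenever $j>i+1$, and the graded differential reduces to $\sum_{i}(-1)^{i+1}(B_0\mid\cdots\mid B_iB_{i+1}\mid\cdots\mid B_m)$; this is exactly what makes the identification with the simplex complex go through.
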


We can note that for $n=0$, we directly obtain $H^{Leib}_i(P_0)=0$, for $i>0$, because $P_0([m])=0$ if $m>0$.

From now on, we fix a postive integer $n$.

Let us first give a survey of the three steps of the proof of this proposition. In the first step, we give a description of bases of the Lie operad and of $\GLie([n],[m])$. In Lemma \ref{bases}, we identify these bases with 
easy combinatorial objects (tuples, or ordered partitions into tuples). 
One important point is to consider all possible values of $m$ at the same time.
This allows us to use the ordering between tuples to obtain, in Proposition \ref{filtration}, a filtration of the $\kk$-module $\oplus_m C_m^{Leib}(P_n)$. Then we show, in Proposition \ref{compatible} that this filtration is compatible with the differential of the complex $C_*^{Leib}(P_n)$. This step in based on the crucial lemma \ref{crochet} 
describing the bracket of two elements of the basis of $Lie$ in terms of elements of the basis.
In the last part of the proof we identify the associated graded complex with a sum of acyclic complexes. This step is heavily based on the thorough understanding of relationships between tuples and ordered partitions into tuples. We deduce Proposition \ref{Leib-projectifs} by a classical spectral sequence argument.

\subsection{Filtration as a $\kk$-module}

\subsubsection{Basis of the Lie operad}

Recall that one can define the free symmetric operad generated by a symmetric collection, and that the notion of operadic ideal exists (cf \cite{LV}).
Therefore symmetric operads can be defined by generators and relations. 

Our operad of interest in this paper, $Lie$, can then be defined by the free symmetric operad generated by the symmetric collection
$\kk \mu$ concentrated in arity $2$, where $\mu$ denotes an antisymmetric bracket, quotiented by the ideal generated by 
the Jacobi relation. 

Recall that the Jacobi relation can be written operadically as
\begin{equation} \label{Jacobi}
\begin{array}{c}
$\xymatrix@M=1pt@C=6pt@R=6pt{
1 \ar@{-}[ddrr] & & 2\ar@{-}[dr] & & 3 \ar@{-}[dl] \\
 & & & *{\mu}\ar@{-}[dl]  & \\
 & & *{\mu}\ar@{-}[d]  & & \\
& &  & & 
}$\end{array}
=
\begin{array}{c}
 $\xymatrix@M=1pt@C=6pt@R=6pt{
1 \ar@{-}[dr] & & 2\ar@{-}[dl] & & 3 \ar@{-}[ddll] \\
 & *{\mu}\ar@{-}[dr]  & &  & \\
 & & *{\mu}\ar@{-}[d]  & &\\
& &  & &  
}$\end{array} 
-
\begin{array}{c}
 $\xymatrix@M=1pt@C=6pt@R=6pt{
1 \ar@{-}[dr] & & 3\ar@{-}[dl] & & 2 \ar@{-}[ddll] \\
 & *{\mu}\ar@{-}[dr]  & &  & \\
 & & *{\mu}\ar@{-}[d] & & \\
& &  & & 
}$\end{array}. 
\end{equation}

In the rest of the paper, all internal vertices of the trees are labelled with the bracket $\mu$, 
therefore we do not write it anymore.

Methods have been developed in \cite{Eric} and \cite{DK2010} to find monomial bases with good properties with respect to an ordering. 

A basis of the $Lie$ operad is given by the following planar trees (cf \cite{Eric}):

$$
\left\{ \vcenter{ \xymatrix@M=0pt@H=0pt@R=13pt@C=13pt@!0{
1\ar@{-}[dr]& &\sigma(2)\ar@{-}[dl] \\
& \ar@{-}[dr] & & \sigma(3)\ar@{-}[dl]  \\
& & \ar@{-}[dr] & & \cdots \ar@{-}[dl] \\
& & & \ar@{-}[dr] & & \cdots \ar@{-}[dl] \\
& & & & \ar@{-}[dr] & & \sigma(n) \ar@{-}[dl] \\
& & & & & \ar@{-}[d] \\
& & & & & &  
} } \  \Bigg\vert \ n \geq 1, \sigma \text{ permutation of } \{2, \ldots, n\} \right\}. $$

Note that this operadic basis corresponds to a basis of the multilinear part of the free Lie algebra on $x_1, \ldots, x_n$ 
given by elements of the form $[ \ldots[[x_1, x_{\sigma(2)}], x_{\sigma(3)}], \ldots, x_{\sigma(n)}]$.
(See \cite[Section 5.6.2]{Reutenauer}).

We denote the part of the basis in arity $n$ by $\mathcal B^{Lie}(n)$. We define similarly $\mathcal B^{Lie}(I)$ for $I$ a finite ordered set of cardinality $n$, 
using the order preserving bijection between $I$ and $\{1, \ldots, n\}$.

We denote the whole basis by $\mathcal B^{Lie}=\bigcup_n \mathcal B^{Lie}(n)$.

\subsubsection{Explicit description of the category associated to the Lie operad}

To describe the category $\GLie$, it is enough to describe the vector spaces of morphisms and to understand the composition of elements of the basis.
We first give a $\kk$-basis for $\GLie([n],[m])$. 
Recall that $\GLie([n],[m])=\underset{\alpha \in \Sh([n],[m])}{\bigoplus} Lie (\alpha^{-1}(0)) \otimes \ldots \otimes Lie (\alpha^{-1}(m))$.

Obviously, if $m>n$, the basis is empty.

If $m \leq n$, we can describe a basis $\mathcal B^{\GLie}(n,m)$ of $\GLie([n],[m])$ via shuffle maps and a tuple of elements of a Lie basis:

$$\mathcal B^{\GLie}(n,m)= \bigcup_{\alpha \in \Sh([n],[m])} \mathcal B^{Lie}(\alpha^{-1}(0)) \times \ldots \times \mathcal B^{Lie}(\alpha^{-1}(m)).$$

We also define $\mathcal B^{\GLie}(n,\N) = \bigcup_m \mathcal B^{\GLie}(n,m)$.

\subsubsection{Notations for elements of the bases}
To avoid drawing huge forests of trees, we first identify the elements of $\mathcal B^{\GLie}(n,\N)$ as some particular tuples with splittings.
We begin by defining these particular tuples and introduce some notations.

\begin{obs}
All trees appearing in $\mathcal B^{Lie}(n)$ are left combs, so they are characterized by their leaves.
An element of $\mathcal B^{Lie}(n)$ can be identified with a $n$-tuple $(1, \sigma(2), \ldots, \sigma(n))$ where $\sigma$ is a permutation of $\{2, \ldots, n\}$,
just by reading the inputs from left to right. So we have a bijection $\mathcal B^{Lie}(n) \simeq \mathfrak{S}_{n-1}$, where $\mathfrak{S}_{n-1}$ is the permutation group. In the sequel, we identify these two sets.
\end{obs}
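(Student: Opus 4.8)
The plan is to read the statement off directly from the explicit basis displayed just above. First I would record that, by the very description of $\mathcal B^{Lie}(n)$, every element is a planar tree of one fixed shape, namely the left comb on $n$ inputs; only the labelling of the leaves varies, being $1, \sigma(2), \ldots, \sigma(n)$ for some permutation $\sigma$ of $\{2, \ldots, n\}$. Since the underlying tree is rigid (the same for every basis element), such a tree is determined by, and conversely determines, the ordered tuple of its leaf labels read from left to right. This already gives the characterization of the trees by their leaves asserted in the observation.

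It then remains to package this as a bijection. I would define $\Phi \co \mathcal B^{Lie}(n) \to \mathfrak{S}_{n-1}$ by sending a tree to the permutation $\sigma$ recorded by its leaf labels after the initial leaf, which is always $1$; the inverse map sends $\sigma$ to the left comb with leaves $1, \sigma(2), \ldots, \sigma(n)$. These two assignments are mutually inverse by construction, since the comb shape is fixed and the entire labelling is recovered from the tuple, whence $\Phi$ is a bijection and the identification $\mathcal B^{Lie}(n) \simeq \mathfrak{S}_{n-1}$ follows. There is no genuine obstacle in this argument: its whole content is the rigidity of the left-comb shape, which forces each tree to be encoded losslessly by its sequence of leaves, so the proof amounts to unwinding the definitions.
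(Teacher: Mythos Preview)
Your proposal is correct and matches the paper's approach: the paper treats this as an observation requiring no separate proof, since it is immediate from the explicit description of $\mathcal B^{Lie}(n)$ as left combs indexed by permutations $\sigma$ of $\{2,\ldots,n\}$. Your argument simply makes explicit the tautological bijection that the paper leaves to the reader.
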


\begin{exple}
Let $(\alpha,f)$ be the generator of $\GLie([6],[2])$ where $\alpha \in \Gamma_{sh}([6],[2])$ is given by:
$$\alpha(0)=\alpha(1)=\alpha(5)=0;\  \alpha(2)=1;\  \alpha(3)=\alpha(4)=\alpha(6)=2$$
and $f=f_0 \otimes f_1 \otimes f_2$ where:
$$f_0=Id \in \mathcal B^{Lie}(3) \simeq \mathfrak{S}_{2};\  f_1=Id \in \mathcal B^{Lie}(1) \simeq \mathfrak{S}_{0};$$
$$ f_2=\tau_{1,2} \in \mathcal B^{Lie}(3) \simeq \mathfrak{S}_{2}.$$
We can represent this morphism by the following picture:
$$ 
\vcenter{\xymatrix@M=0pt@H=0pt@R=13pt@C=13pt@!0{
0\ar@{-}[dr]& & 1\ar@{-}[dl] & \\
& \ar@{-}[dr]& & 5\ar@{-}[dl]\\
& & \ar@{-}[d] \\
& & 0&}}
\
\vcenter{\xymatrix@M=0pt@H=0pt@R=13pt@C=13pt@!0{
2\ar@{-}[ddd]\\
\\
\\
1&}}
\
\vcenter{\xymatrix@M=0pt@H=0pt@R=13pt@C=13pt@!0{
3\ar@{-}[dr]& & 6\ar@{-}[dl] & \\
& \ar@{-}[dr]& & 4\ar@{-}[dl]\\
& & \ar@{-}[d] \\
& & 2&}}$$

\end{exple}

\begin{defi} \label{STuple}
 A $m$-split $(n+1)$-tuple is an ordered partition of $[n]$ into $m+1$ tuples: 
$B_0 \cup B_1 \cup \ldots \cup B_m$
where $m \geq 0$ and $B_i=(k_{i,0}, \ldots, k_{i,\ell_i})$ satisfying two conditions:
\begin{enumerate}
 \item $0 \leq i<j \leq m \Rightarrow k_{i,0} < k_{j,0}$ ,
 \item $\forall \, 0 \leq i \leq m, \forall \,  0 \leq q \leq \ell_i, k_{i,0} < k_{i,q}$.
\end{enumerate}
To ease notations, we denote $B_0 \cup B_1 \cup \ldots \cup B_m$ by $(B_0 \mid B_1 \mid \ldots \mid B_m)$.
\end{defi}

\begin{rem}
The conditions impose that $k_{0,0}=0$. 
The first condition can be read as ``the first term of a tuple is smaller than the first term of the following tuples''. 
The second condition can be read as ``the first term of a tuple is smaller than the other terms of this tuple''. 
\end{rem}

\begin{nota}
We denote by $STuple(n,m)$ the set of $m$-split $(n+1)$-tuples, by $STuple(n,\N)$ the set $\cup_m STuple(n,m)$ 
and by $Tuple(n)$ the set $STuple(n,0)$ corresponding to the set of $(n+1)$-tuples starting with $0$.
\end{nota}

\begin{exple}
 For $n=2$, the set $STuple(2, \N)$ is composed of $6$ elements: 

$(0\mid1\mid2), (0\mid1,2), (0,1\mid2), (0,2\mid1), (0,1,2), (0,2,1).$

The set $Tuple(2)$ is composed of only $2$ elements: $(0,1,2)$ and $(0,2,1)$.

\end{exple}

\begin{lm} \label{condition3}
The two conditions $(1)$ and $(2)$ in Definition \ref{STuple} are jointly equivalent to the following single condition:
\begin{equation}
 \tag*{(3)} \forall \, 0 \leq i \leq j \leq m, \forall \, 0 \leq q \leq \ell_j, k_{i,0} < k_{j,q}. 
\end{equation}
\end{lm}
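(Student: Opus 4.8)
The plan is to prove the two implications separately, after decomposing the range of condition $(3)$ into three disjoint regimes according to the pair $(i,j)$ and the index $q$: the diagonal case $i=j$ with $q=0$, which is the trivial self-comparison $k_{i,0}<k_{i,0}$ and is therefore excluded (the inequality is strict while the two sides coincide); the case $i=j$ with $q\geq 1$; and the case $i<j$ with $0\leq q\leq \ell_j$. Together these exhaust all nontrivial instances occurring in $(3)$, so it suffices to treat each regime.

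First I would check that $(3)$ implies both $(1)$ and $(2)$, which is immediate by specialisation and needs no computation. Restricting $(3)$ to the sub-range $i<j$ and $q=0$ yields exactly $k_{i,0}<k_{j,0}$, which is condition $(1)$. Restricting $(3)$ to $i=j$ with $q\geq 1$ yields $k_{i,0}<k_{i,q}$, which is condition $(2)$.

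For the converse, assume $(1)$ and $(2)$ and verify $(3)$ on each regime. The regime $i=j$ (with $q\geq 1$) is condition $(2)$ verbatim, and the regime $i<j$, $q=0$ is condition $(1)$ verbatim. The only instances of $(3)$ not already among the hypotheses are those with $i<j$ and $q\geq 1$; these follow by transitivity, since condition $(1)$ gives $k_{i,0}<k_{j,0}$ and condition $(2)$, applied to the tuple $B_j$, gives $k_{j,0}<k_{j,q}$, whence $k_{i,0}<k_{j,q}$.

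There is no genuine obstacle here: the entire mathematical content is the single transitivity step $k_{i,0}<k_{j,0}<k_{j,q}$. The only point requiring care is the bookkeeping of the index ranges, namely noting that the self-comparison $i=j$, $q=0$ is excluded and confirming that conditions $(1)$ and $(2)$ between them cover every non-diagonal instance of $(3)$, so that nothing is left unverified.
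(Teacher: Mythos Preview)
Your proof is correct and is exactly the kind of verification the paper leaves implicit (its proof reads simply ``Each direction is easy to check''). You have also correctly noted the degenerate instance $i=j$, $q=0$, which must be excluded for the strict inequality in $(3)$ (and equally in $(2)$) to make sense; this is a small imprecision in the paper's stated index ranges that your write-up handles appropriately.
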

This condition can be read as ``the first term of a tuple is smaller than the following terms''.

\begin{proof} 
Each direction is easy to check.
\end{proof}

If $B_i$ and $B_{i+1}$ are two consecutives blocks of a split tuple, we denote by $B_iB_{i+1}$ their concatenation.

\begin{lm} 
Let $(B_0 \mid B_1 \mid \ldots \mid B_m)$ be a split tuple, and $0 \leq i <m$.

Then $(B_0 \mid B_1 \mid \ldots \mid B_iB_{i+1} \mid \ldots \mid B_m)$
is also a split tuple. 
\end{lm}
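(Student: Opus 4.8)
The plan is to verify directly that the concatenated tuple still satisfies the two defining conditions of Definition \ref{STuple}. Since merging $B_i$ and $B_{i+1}$ leaves the underlying set $[n]$ unchanged, the result $(B_0 \mid \ldots \mid B_iB_{i+1} \mid \ldots \mid B_m)$ is an ordered partition of $[n]$ into $m$ tuples, so the only thing to check is conditions $(1)$ and $(2)$; this will then exhibit it as an $(m-1)$-split $(n+1)$-tuple. To organize the check I would relabel the new blocks as $C_0, \ldots, C_{m-1}$, so that $C_j = B_j$ for $j<i$, $C_i = B_iB_{i+1}$, and $C_j = B_{j+1}$ for $j>i$.

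For condition $(1)$, the first entry of $C_i = B_iB_{i+1}$ is $k_{i,0}$, so the sequence of first entries of the $C_j$ is exactly $(k_{0,0}, k_{1,0}, \ldots, k_{i,0}, k_{i+2,0}, \ldots, k_{m,0})$, that is, the original strictly increasing sequence of first entries with the single term $k_{i+1,0}$ deleted. A subsequence of a strictly increasing sequence is strictly increasing, so condition $(1)$ holds for the $C_j$.

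For condition $(2)$, every block except $C_i$ coincides with one of the original $B_j$ and hence already satisfies it. For $C_i = B_iB_{i+1}$, whose first entry is $k_{i,0}$, I would split the verification according to whether the entry being compared comes from $B_i$ or from $B_{i+1}$. For an entry of $B_i$ the required inequality is condition $(2)$ for the original block $B_i$; for an entry $k_{i+1,q}$ of $B_{i+1}$ it is precisely $k_{i,0} < k_{i+1,q}$, which is the instance of condition $(3)$ of Lemma \ref{condition3} for the pair of indices $i < i+1$. Thus condition $(2)$ holds for all $C_j$.

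I do not expect any genuine obstacle here; the argument is routine unwinding of the definitions. The only point needing more than bookkeeping is the crossing of the ``seam'' between $B_i$ and $B_{i+1}$ in condition $(2)$, and this is exactly what the reformulation $(3)$ of Lemma \ref{condition3} is designed to supply. Combining the two verifications shows that $(B_0 \mid \ldots \mid B_iB_{i+1} \mid \ldots \mid B_m)$ satisfies both defining conditions and is therefore a split tuple.
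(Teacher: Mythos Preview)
Your proof is correct and follows essentially the same idea as the paper's. The only difference is organizational: the paper works entirely with the equivalent condition~$(3)$ of Lemma~\ref{condition3}, observing in one line that condition~$(3)$ for $(B_0\mid\ldots\mid B_m)$ immediately yields condition~$(3)$ for the concatenated tuple (the new set of block-leaders is a subset of the old one), whereas you verify conditions~$(1)$ and~$(2)$ separately and invoke~$(3)$ only at the seam. Both arguments are routine; the paper's is slightly more economical.
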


\begin{proof} 
The condition (3) in the above lemma for $(B_0 \mid B_1 \mid \ldots \mid B_m)$ directly implies the condition for $(B_0 \mid B_1 \mid \ldots \mid B_iB_{i+1} \mid \ldots \mid B_m)$.
\end{proof}

We now use these tuples and split tuples to describe the bases $\mathcal B ^{\GLie}(n,m)$ and $\mathcal B ^{\GLie}(n,\N)$.

\begin{lm} \label{bases}
\begin{itemize}
 \item The set $\mathcal B ^{\GLie}(n,m)$ is in bijection with $STuple(n,m)$. In particular, $\mathcal B ^{\GLie}(n,0)$ is in bijection with $Tuple(n)$
 \item The set $\mathcal B ^{\GLie}(n,\N)$ is in bijection with $STuple(n, \N)$.
\end{itemize}
\end{lm}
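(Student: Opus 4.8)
The plan is to exhibit two mutually inverse maps between $STuple(n,m)$ and $\mathcal B^{\GLie}(n,m)$ directly, reading a split tuple block by block. First I would recall that, by the identification of left combs with tuples discussed above, for any finite ordered set $I$ the basis $\mathcal B^{Lie}(I)$ is in bijection with the tuples $(k_0, k_1, \ldots, k_\ell)$ whose entries run through all of $I$ and satisfy $k_0 = \min(I)$; equivalently, with the tuples listing $I$ whose first entry is the smallest. This is precisely condition $(2)$ of Definition \ref{STuple} read on a single block.

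Starting from a split tuple $(B_0 \mid B_1 \mid \ldots \mid B_m)$ with $B_i = (k_{i,0}, \ldots, k_{i,\ell_i})$, I would define $\alpha \colon [n] \to [m]$ by sending every entry of $B_i$ to $i$, so that $\alpha^{-1}(i)$ is the underlying set of $B_i$. Since the $B_i$ partition $[n]$ into nonempty blocks, $\alpha$ is surjective. Condition $(2)$ gives $\min(\alpha^{-1}(i)) = k_{i,0}$, so condition $(1)$ reads $\min(\alpha^{-1}(i)) < \min(\alpha^{-1}(j))$ for $i < j$; hence $\alpha$ is a (surjective) shuffle map, and the normalization $k_{0,0} = 0$ noted in the Remark after Definition \ref{STuple} gives $\alpha(0) = 0$, so $\alpha \in \Sh([n],[m])$. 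Each block $B_i$, beginning with $\min(\alpha^{-1}(i))$ by condition $(2)$, is exactly a tuple representing an element $f_i \in \mathcal B^{Lie}(\alpha^{-1}(i))$, so the pair $(\alpha, f_0 \otimes \cdots \otimes f_m)$ is a generator of $\mathcal B^{\GLie}(n,m)$.

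For the inverse, given $\alpha \in \Sh([n],[m])$ — necessarily surjective, since an empty fiber would make the factor $\mathcal B^{Lie}(\emptyset)$ empty, as recorded in the Remark identifying the sum over $\Sh$ with the sum over $\Sh^{surj}$ — together with $f_i \in \mathcal B^{Lie}(\alpha^{-1}(i))$, I would set $B_i$ to be the tuple $f_i$. Condition $(2)$ then holds because each Lie basis tuple begins with $\min(\alpha^{-1}(i))$, and condition $(1)$ holds because the shuffle condition on $\alpha$ says precisely that these minima increase with $i$; the fibers of $\alpha$ partition $[n]$, so $(B_0 \mid \ldots \mid B_m) \in STuple(n,m)$. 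The two constructions are visibly inverse, giving the first bijection, and the special case $m=0$ specializes the conditions to those defining $Tuple(n)$. The second statement follows by taking the disjoint union over $m$, since $STuple(n,\N) = \bigcup_m STuple(n,m)$ and $\mathcal B^{\GLie}(n,\N) = \bigcup_m \mathcal B^{\GLie}(n,m)$.

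The argument is essentially a dictionary between two packagings of the same combinatorial data, so there is no deep obstacle. The only point requiring care is to match the shuffle condition on $\alpha$ exactly with condition $(1)$, using condition $(2)$ to identify $\min(\alpha^{-1}(i))$ with the leading entry $k_{i,0}$, and to keep track throughout of the fact that only surjective shuffle maps contribute, so that the blocks are nonempty and genuinely partition $[n]$.
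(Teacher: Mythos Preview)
Your proposal is correct and follows essentially the same approach as the paper: both proofs construct the bijection block by block, matching condition~(1) with the shuffle property of $\alpha$ and condition~(2) with the fact that each Lie basis comb begins with the minimum of its input set. The only cosmetic difference is that the paper defines the map $\Psi$ in the direction $\mathcal B^{\GLie}(n,m) \to STuple(n,m)$ first (writing the permutations $\sigma_i$ explicitly via $\mathcal B^{Lie}(\alpha^{-1}(i)) \simeq \mathfrak S_{|\alpha^{-1}(i)|-1}$), whereas you start from $STuple(n,m)$; the content is identical.
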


\begin{proof}

Let $(\alpha, \sigma_0, \ldots, \sigma_m)$ be an element in $\mathcal B ^{\GLie}(n,m)$ where 
$\alpha \in \Sh([n],[m])$ and $\forall i \in \{0,\ldots,m\}$, $\sigma_i \in \mathcal B ^{Lie}(\alpha^{-1}(i))\simeq \mathfrak{S}_{\mid  \alpha^{-1}(i)\mid -1}$.
 We consider for all $i \in \{0, \ldots, m\}$ the ordered set $\alpha^{-1}(i)=\{k_{i,0}<k_{i,1}< \ldots<k_{i,\mid  \alpha^{-1}(i)\mid} \}$. 
We associate to $(\alpha, \sigma_0, \ldots, \sigma_m)$ the ordered partition of $[n]$ into $m+1$ tuples $(B_0 \mid B_1 \mid \ldots \mid B_m)$
 where for all $i \in \{0, \ldots, m\}$, $B_i=\{k_{i,0}, k_{i, \sigma_i(1)}, \ldots, k_{i, \sigma_i(\mid  \alpha^{-1}(i)\mid})\}$. 
We have $(B_0 \mid B_1 \mid \ldots \mid B_m) \in STuple(n,m)$ since condition $(1)$ in Definition \ref{STuple} 
corresponds to the shuffling property of $\alpha \in \Sh([n],[m])$ and condition $(2)$ is obviously satisfied. So we define the map
$$\Psi: \mathcal B ^{\GLie}(n,m) \to STuple(n,m)$$
 by:
$$\Psi(\alpha, \sigma_0, \ldots, \sigma_m)=(B_0 \mid B_1 \mid \ldots \mid B_m).$$
This map $\Psi$ is a bijection whose inverse is given by:
$$\Psi^{-1}(B_0 \mid B_1 \mid \ldots \mid B_m)=\Psi^{-1}(k_{0,0}, \ldots, k_{0, l_0} \mid \ldots \mid k_{n,0}, \ldots, k_{n, l_n})=(\alpha, \sigma_0, \ldots, \sigma_m)$$
where for all $i \in \{0, \ldots, m\}$, $\alpha(k_{i,0})= \ldots= \alpha(k_{i, l_i})=i$ and $\sigma_i$ is the permutation 
such that $k_{i,0}<k_{i,\sigma_i^{-1}(1)}<k_{i,\sigma_i^{-1}(2)}< \ldots< k_{i,\sigma_i^{-1}(l_i)}.$
The map $\alpha$ is a shuffle map by condition $(1)$ in Definition \ref{STuple}
\end{proof}

\begin{exple}
 The following basis element of $\GLie([6], [2])$:
$$ 
\vcenter{\xymatrix@M=0pt@H=0pt@R=13pt@C=13pt@!0{
0\ar@{-}[dr]& & 1\ar@{-}[dl] & \\
& \ar@{-}[dr]& & 5\ar@{-}[dl]\\
& & \ar@{-}[d] \\
& & 0&}}
\
\vcenter{\xymatrix@M=0pt@H=0pt@R=13pt@C=13pt@!0{
2\ar@{-}[ddd]\\
\\
\\
1&}}
\
\vcenter{\xymatrix@M=0pt@H=0pt@R=13pt@C=13pt@!0{
3\ar@{-}[dr]& & 6\ar@{-}[dl] & \\
& \ar@{-}[dr]& & 4\ar@{-}[dl]\\
& & \ar@{-}[d] \\
& & 2&}}$$
corresponds to the $2$-split $7$-tuple  in $STuple(6,2)$

$$
(0,1,5 \mid 2 \mid 3,6,4)
$$
by the bijection in Lemma \ref{bases}.
\end{exple}

In the rest of the paper, we identify the set  $\mathcal B ^{\GLie}(n,m)$ with $STuple(n,m)$.

\subsubsection{Ordering and filtration }

We now define an ordering on the set $\mathcal B^{\GLie}(n,\N)$ and use it to endow $\bigoplus_m \GLie([n],[m])$ with a filtration
(note that this direct sum is actually finite).

\begin{obs}
Forgetting the vertical bars of an element of $STuple(n, \N)$ gives an element of $Tuple(n)$. This defines a surjective map $p: STuple(n, \N) \to Tuple(n)$. 
\end{obs}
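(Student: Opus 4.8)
The plan is to verify the two assertions of the observation separately: first that the rule ``forget the vertical bars'' genuinely produces an element of $Tuple(n)$ (so that $p$ is well-defined), and then that $p$ is onto. Concretely, for $(B_0 \mid B_1 \mid \ldots \mid B_m) \in STuple(n,\N)$ I would set $p(B_0 \mid \ldots \mid B_m)$ to be the concatenation $B_0 B_1 \cdots B_m$, regarded as a single $(n+1)$-tuple.

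For well-definedness, I would first observe that $B_0 B_1 \cdots B_m$ is an ordering of all of $[n]$ with no repetitions, since the blocks $B_0, \ldots, B_m$ form a partition of $[n]$. It then remains to check the two conditions of Definition \ref{STuple} in the case $m=0$. The first entry of the concatenation is $k_{0,0}$, and by the Remark following Definition \ref{STuple} (equivalently, directly from condition $(3)$ of Lemma \ref{condition3}, which forces $k_{0,0}$ to be the global minimum of $[n]$) we have $k_{0,0}=0$; thus the tuple starts with $0$, and condition $(1)$ is vacuous for $m=0$. Condition $(2)$ then reads $0 < k_{i,q}$ for every remaining entry, which holds because $0$ is the least element of $[n]$. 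Hence $B_0 B_1 \cdots B_m \in STuple(n,0)=Tuple(n)$, as required. Alternatively, one may reach the same conclusion by iterating the concatenation lemma stated just above, merging consecutive blocks one at a time until a single block remains.

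Surjectivity is immediate: since $Tuple(n)=STuple(n,0)$ is by definition a subset of $STuple(n,\N)$, and an element of $STuple(n,0)$ has a single block and hence no bars to forget, the map $p$ restricts to the identity on $Tuple(n)$; in particular every element of $Tuple(n)$ lies in the image of $p$. I do not expect any genuine obstacle here: the only place the defining hypotheses of a split tuple are actually used is the identity $k_{0,0}=0$, which is precisely what guarantees that forgetting the bars lands inside $Tuple(n)$ rather than in some larger set of reorderings of $[n]$.
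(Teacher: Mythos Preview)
Your proposal is correct. In the paper this is stated as an Observation with no proof at all; it is treated as self-evident, relying implicitly on the concatenation lemma just above and on the remark that $k_{0,0}=0$. Your argument makes explicit precisely these two ingredients (well-definedness via $k_{0,0}=0$, or equivalently via iterated concatenation; surjectivity via $Tuple(n)=STuple(n,0)\subset STuple(n,\N)$ and $p$ restricting to the identity there), so it matches the intended reasoning.
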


Using the bijection between $\mathcal B ^{\GLie}(n,\N)$ and $STuple(n, \N)$, we define $p:  \mathcal B ^{\GLie}(n,\N) \to Tuple(n)$, which is again a surjection.

We order the set $Tuple(n)$ with the usual total lexicographical ordering.

We now define an ascending filtration on $\bigoplus_m \GLie([n],[m])$.

\begin{lm}
Let $u$ be an element in $Tuple(n)$. The $\kk$-modules:
$$F_u:=\displaystyle{\bigoplus_{b \in \mathcal B ^\GLie(n, \N), p(b) \geq u} \kk b}$$
define an ascending filtration of the $\kk$-module $\bigoplus_m \GLie([n],[m])$.

The associated graded object is $\displaystyle{gr\Big(\bigoplus_m \GLie([n],[m])\Big) = \bigoplus_{u \in Tuple(n)} gr_u}$
where
$gr_u$ is $\displaystyle{\bigoplus_{b \in \mathcal B ^\GLie(n, \N), p(b) = u} \kk b}$.
\end{lm}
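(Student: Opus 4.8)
The plan is to treat this as a purely $\kk$-module statement, exploiting that $\mathcal B^{\GLie}(n,\N)$ is a $\kk$-basis of $\bigoplus_m \GLie([n],[m])$ (Lemma~\ref{bases}, via the identification with $STuple(n,\N)$) and that each $F_u$ is by definition the $\kk$-span of a subset of this basis. The first observation is that every $F_u$ is automatically a $\kk$-submodule, being generated by basis elements; so the only real content is to organize the $F_u$ into a nested family and to compute the successive quotients.

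First I would record the monotonicity of $u \mapsto F_u$. Since $Tuple(n)$ is finite and totally ordered by the lexicographic order, I list its elements as $u_1 < u_2 < \cdots < u_N$. For $u \leq u'$ the condition $p(b) \geq u'$ is more restrictive than $p(b) \geq u$, hence $F_{u'} \subseteq F_u$; thus $F_{u_1} \supseteq F_{u_2} \supseteq \cdots \supseteq F_{u_N}$, and $F_{u_1} = \bigoplus_m \GLie([n],[m])$ because $p(b) \geq u_1$ holds for every basis element $b$. Re-indexing by the reverse order, $G_j := F_{u_{N+1-j}}$ with $G_0 := 0$, yields an exhaustive ascending filtration $0 = G_0 \subseteq G_1 \subseteq \cdots \subseteq G_N = \bigoplus_m \GLie([n],[m])$; the apparently reversed inclusions in the $u$-indexing are only a matter of the direction in which one reads the totally ordered set $Tuple(n)$.

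Next I would compute the associated graded. Adopting the convention $F_{u_{N+1}} := 0$, I claim $F_{u_k}/F_{u_{k+1}} \simeq gr_{u_k}$ for every $k$. Indeed, a basis element $b$ lies in $F_{u_k}$ but not in $F_{u_{k+1}}$ exactly when $u_k \leq p(b)$ and $p(b) \not\geq u_{k+1}$; by totality of the order and the fact that $u_{k+1}$ is the immediate successor of $u_k$, this forces $p(b) = u_k$. Hence $F_{u_k}/F_{u_{k+1}} \simeq \bigoplus_{p(b)=u_k}\kk b = gr_{u_k}$, the quotient being canonically split since both terms are spanned by subsets of the basis; for $k=N$ this reads $gr_{u_N} = F_{u_N} = \bigoplus_{p(b)=u_N}\kk b$.

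Finally, the global decomposition follows by partitioning the basis according to the value of $p$: since the fibers $p^{-1}(u)$ are disjoint and their union is all of $\mathcal B^{\GLie}(n,\N)$, one gets
$$\bigoplus_m \GLie([n],[m]) = \bigoplus_{b \in \mathcal B^{\GLie}(n,\N)} \kk b = \bigoplus_{u \in Tuple(n)} \Big(\bigoplus_{p(b)=u}\kk b\Big) = \bigoplus_{u \in Tuple(n)} gr_u,$$
matching the claimed associated graded object. I do not expect any genuine obstacle: the entire argument is the bookkeeping of a ``subset-of-a-basis'' filtration over a finite totally ordered index set, and the only points requiring a word of care are the direction of the inclusions (handled by the reverse re-indexing) and the identification of consecutive quotients with the fibers of $p$, both of which reduce to the totality of the lexicographic order on $Tuple(n)$.
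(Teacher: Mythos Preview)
Your argument is correct and is exactly the straightforward ``subset-of-a-basis'' bookkeeping the paper has in mind; in fact the paper states this lemma without proof, treating it as immediate from the definition of $F_u$ and the total lexicographic order on $Tuple(n)$. Your only addition is making explicit the reverse re-indexing needed to read the decreasing family $u\mapsto F_u$ as an ascending filtration, which is a helpful clarification rather than a different approach.
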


Recall that $C^{Leib}_m(P_n)= \GLie([n],[m])$ as $k$-modules. Thus we obtain the following proposition:

\begin{prop} \label{filtration}
We obtain a filtration on $C^{Leib}_*(P_n)$ (seen as a $\kk$-module) indexed by elements $u$ in $Tuple(n)$.
\end{prop}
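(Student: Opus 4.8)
The plan is to deduce this proposition directly from the preceding lemma by a mere change of viewpoint, since all the substantial work has already been carried out there. First I would recall that, by the identity $C^{Leib}_m(P_n) = \GLie([n],[m])$ noted just above, the underlying graded $\kk$-module of the complex $C^{Leib}_*(P_n)$ is
$$\bigoplus_m C^{Leib}_m(P_n) = \bigoplus_m \GLie([n],[m]),$$
so that the two objects coincide as $\kk$-modules. The preceding lemma equips the right-hand side with the ascending filtration $\{F_u\}_{u \in Tuple(n)}$, where $F_u = \bigoplus_{b \in \mathcal{B}^{\GLie}(n,\N),\, p(b) \geq u} \kk b$. Transporting this filtration along the above identification immediately yields a filtration of $C^{Leib}_*(P_n)$ (seen as a $\kk$-module) indexed by $Tuple(n)$.

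Next I would record the one point that makes this transport meaningful, namely that the filtration is compatible with the homological grading. Each basis element $b \in \mathcal{B}^{\GLie}(n,\N)$ corresponds under Lemma \ref{bases} to a split tuple in $STuple(n,\N)$, and the number $m$ of vertical bars of that split tuple is exactly the homological degree in which $b$ lives, that is $b \in \mathcal{B}^{\GLie}(n,m) = STuple(n,m)$. Hence each $F_u$ decomposes as $F_u = \bigoplus_m \big(F_u \cap C^{Leib}_m(P_n)\big)$, and the filtration restricts degreewise to an ascending filtration of every $C^{Leib}_m(P_n)$. This is precisely what is meant by a filtration on the complex seen as a $\kk$-module; no claim about the differential is made at this stage, the compatibility of the filtration with $d$ being the content of the next proposition.

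Since the statement is essentially a repackaging of the lemma, I do not expect any genuine obstacle. The only thing to verify is that the two invariants attached to a basis element $b$—its homological degree $m$ (the number of blocks, equivalently bars, of the associated split tuple) and its image $p(b) \in Tuple(n)$ obtained by forgetting the bars—are independently well defined on basis elements. Both are manifestly so, so the span description of $F_u$ is unambiguous and genuinely respects the grading. All the real content, namely the bijection of Lemma \ref{bases}, the surjection $p$, and the lexicographic order on $Tuple(n)$, has already been established, and this proposition merely assembles these ingredients.
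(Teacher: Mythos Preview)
Your proposal is correct and follows exactly the paper's approach: the paper does not give a separate proof of this proposition but simply precedes it with the sentence ``Recall that $C^{Leib}_m(P_n)= \GLie([n],[m])$ as $\kk$-modules,'' so the filtration from the preceding lemma transfers immediately. Your additional remark that the filtration respects the homological grading is a helpful clarification, though the paper treats the whole complex as a single $\kk$-module and does not record this separately.
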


We have to show that this filtration is compatible with the differential. This is the point of the next subsection.

\subsection{Compatibility of the filtration with the differential}

We first need to rewrite in $\mathcal B^{Lie}$ a product of two basis elements.
This allows us to understand the image by $P_n(d_{i,j})=(d_{i,j})_*$ (where $-_*$ denotes the postcomposition) of an element in  $\mathcal B^{\GLie}(n,\N)$. 
We actually do not need all the terms of the image, but only its leading term relatively to the ordering defined in the previous section.

\subsubsection{Products of elements in the basis of $Lie$}
We need to rewrite in $\mathcal B^{Lie}$ such a composite of two basis elements:
$$\xymatrix@M=0pt@H=0pt@W=0pt@R=11pt@C=11pt@!0{
i_0\ar@{-}[dr]& & i_1\ar@{-}[dl] & & j_0\ar@{-}[dr] & & j_1\ar@{-}[dl] & & \\
& \ar@{-}[dr] & & \cdots \ar@{-}[dl] & & \ar@{-}[dr] & & \cdots\ar@{-}[dl] & \\
& & \ar@{-}[dr] & & i_n\ar@{-}[dl] & & \ar@{-}[dr] & & j_m\ar@{-}[dl] \\
& & & \ar@{-}[ddrr] & & & & \ar@{-}[ddll]  \\
& & & & & & & \\
& & & & &\ar@{-}[d] & &\\
& & & & & &
}$$
where $i_0<j_0$, $i_0<i_k$ for all $1\leq k \leq n$ and $j_0 < j_k$ for all $1 \leq k \leq m$.

\begin{lm}\label{crochet} 
 The decomposition of this element in the basis $\mathcal B^{Lie}$ is the following sum:
$$\sum_{k=0}^m \
\sum_{S \subset \{1, \ldots, m \}, \mid S \mid =k} (-1)^k
\vcenter{\xymatrix@M=0pt@H=0pt@R=11pt@C=11pt@!0{
i_0\ar@{-}[dr]& & i_1\ar@{-}[dl] \\
& \ar@{-}[dr] & & \cdots\ar@{-}[dl]  \\
& & \ar@{-}[dr] & & i_n\ar@{-}[dl] \\
& & & \ar@{-}[dr] & & \cdots \ar@{-}[dl] \\
& & & & \ar@{-}[dr] & & j_0\ar@{-}[dl] \\
& & & & & \ar@{-}[dr] & & \cdots\ar@{-}[dl] \\
& & & & & & \ar@{-}[d] \\
& & & & & & & & & & 
}}$$
where: \\
- the inputs between $i_0$ and $i_n$ are in the same order as before,\\
- $j_0$ is in the $(n+2+k)$-th position, \\
- the inputs between $i_n$ and $j_0$ are labelled by the $j_\ell$ for $\ell \in S$ (with $\ell$ decreasing from top to bottom),\\
- the inputs below $j_0$ are labelled by the $j_\ell$ for $\ell \notin S$ (with $\ell$ increasing from top to bottom).
\end{lm}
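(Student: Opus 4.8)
The plan is to prove the identity by induction on $m$, the number of inputs of the right-hand comb strictly after its minimal leaf $j_0$, peeling off the rightmost leaf $j_m$ by means of the Jacobi relation~\eqref{Jacobi}. For $m=0$ the right comb is the single leaf $j_0$, so the composite is just $[L,j_0]$ with $L=[\ldots[[i_0,i_1],\ldots],i_n]$; since $i_0<i_k$ and $i_0<j_0$, the element $i_0$ is the smallest of all leaves, and $[L,j_0]$ is already the left comb $(i_0,i_1,\ldots,i_n,j_0)$ of $\mathcal{B}^{Lie}$. This is exactly the single term $k=0$, $S=\emptyset$ of the asserted sum, so the base case holds. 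I would set up the induction hypothesis as a statement quantified over all admissible configurations (all values of $n$ and of the leaf labels), since this generality is needed below.

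For the inductive step, write $R=[R',j_m]$ with $R'=[\ldots[[j_0,j_1],\ldots],j_{m-1}]$ and apply~\eqref{Jacobi} in the form $[L,[R',j_m]]=[[L,R'],j_m]-[[L,j_m],R']$. In the first term, $[L,R']$ is again a composite of the type considered, now with a right comb $R'$ having only $m-1$ leaves beyond $j_0$, so the induction hypothesis expands it as a signed sum of left combs; bracketing on the right with $j_m$ merely appends $j_m$ as the new rightmost leaf, which keeps each summand a basis element because $i_0<j_m$ is still the smallest leaf. In the second term, $[L,j_m]$ is itself the basis left comb $(i_0,\ldots,i_n,j_m)$ (again $i_0$ is smallest), so $[[L,j_m],R']$ is once more a composite of the type considered—with the role of $L$ played by $(i_0,\ldots,i_n,j_m)$ and the same right comb $R'$—and the induction hypothesis applies to it as well.

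It then remains to recombine the two expansions into the asserted sum over subsets $S\subseteq\{1,\ldots,m\}$ with sign $(-1)^{|S|}$. The subsets with $m\notin S$ should match bijectively the subsets $S'\subseteq\{1,\ldots,m-1\}$ produced by the first term: as $m$ is the largest index, $j_m$ lands at the very bottom of the comb, i.e.\ as the last element of the increasing tail indexed by the complement of $S$, which is precisely the effect of bracketing $[L,R']$ with $j_m$, and the signs agree since $|S|=|S'|$. The subsets with $m\in S$ should match the subsets $S''\subseteq\{1,\ldots,m-1\}$ produced by the second term: writing $S=\{m\}\cup S''$, the leaf $j_m$ now sits at the top of the decreasing block indexed by $S$, which is exactly where it appears once absorbed into the $L$-part $(i_0,\ldots,i_n,j_m)$, while the sign $-1$ in front of the second term realizes $(-1)^{|S|}=-(-1)^{|S''|}$. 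Summing the two families recovers the full formula, and the positional claim then follows automatically: the $i$-part contributes $n+1$ leaves and the decreasing block contributes $k=|S|$ leaves, placing $j_0$ at position $n+2+k$.

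The conceptual content is thus carried entirely by the single application of Jacobi. I expect the genuine work—and the main source of error—to lie in the bookkeeping of the last paragraph: keeping the sign convention of~\eqref{Jacobi} consistent, and verifying on the level of leaf sequences that appending $j_m$ at the bottom versus absorbing it into the left comb really does realize the dichotomy $m\notin S$ versus $m\in S$ with the correct decreasing/increasing placement. Once this comparison of leaf sequences is checked, the equality of left combs is immediate.
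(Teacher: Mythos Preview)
Your argument is correct. Like the paper, you prove the identity by induction on $m$ with the Jacobi relation as the sole tool, but you organize the inductive step differently: you peel off the \emph{bottom} leaf $j_m$ of the right comb via $[L,[R',j_m]]=[[L,R'],j_m]-[[L,j_m],R']$ and then invoke the induction hypothesis on both summands, whereas the paper peels from the \emph{top}, collapsing $[j_0,j_1]$ to a single leaf, applying the formula at level $m$, grafting $[j_0,j_1]$ back, and only then using Jacobi term by term. Your route has the pleasant feature that the two summands are immediately of the required shape (once the hypothesis is stated for all $n$, as you do), and the dichotomy $m\notin S$ versus $m\in S$ falls out cleanly with the sign $-(-1)^{|S''|}=(-1)^{|S|}$; the paper's route stays closer to the classical triangularity argument for Hall bases in \cite[Theorem~5.1]{Reutenauer}. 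Either way the bookkeeping you flag---that appending $j_m$ realizes ``$j_m$ at the end of the increasing tail'' while absorbing $j_m$ into $L$ realizes ``$j_m$ at the start of the decreasing block''---is exactly the point, and your description of it is accurate.
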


Let us denote by $b_k^S$ the element of the basis appearing for the indices $k$ of the first sum and $S$ of the second sum.

\begin{proof}[Sketch of proof]
 We proceed by induction on $m$.

For $m=0$, there is nothing to prove.

For $m=1$, we apply the Jacobi relation. 

To prove the formula for $m+1$ if $m \geq 1$, the idea is to apply the formula for $m$ to the tree where 
$ \vcenter{\xymatrix@M=0pt@H=0pt@R=7pt@C=10pt@!0{ j_0\ar@{-}[dr]& & j_1\ar@{-}[dl] \\ &\ar@{-}[d] &\\&&}}$ 
has been replaced by $j_0$. Then we graft 
$ \vcenter{\xymatrix@M=0pt@H=0pt@R=7pt@C=10pt@!0{ j_0\ar@{-}[dr]& & j_1\ar@{-}[dl] \\ &\ar@{-}[d] &\\&&}}$ 
back in every term where $j_0$ appears. Finally we use the Jacobi relation again, and obtain the desired formula for $m+1$. We refer the reader to the proof of  \cite[Theorem 5.1]{Reutenauer} for details.
\end{proof}

\begin{rem}
In \cite[Theorem 5.1]{Reutenauer} the description of the basis $\mathcal B^{Lie}$ corresponds to the fact that Lyndon words form a Hall set and the previous lemma corresponds to the triangularity property of the corresponding Hall basis. Notice that a careful study of the proof of  \cite[Theorem 5.1]{Reutenauer}  gives the more precise formula given in the statement of Lemma \ref{crochet}.
\end{rem}

\begin{rem}
This lemma moreover implies that the basis satisfies a property very similar to the Poincar\'e-Birkhoff-Witt property of associative algebras.
It is then possible to show that the category $\GLie$ is actually a Koszul category.
\end{rem}

\begin{exple} \label{exampled01}
 $$\vcenter{\xymatrix@M=0pt@H=0pt@W=0pt@R=13pt@C=13pt@!0{
i_0\ar@{-}[dr]& & i_1\ar@{-}[dl] & & j_0\ar@{-}[dr] & & j_1\ar@{-}[dl] & & \\
& \ar@{-}[dr] & &  & & \ar@{-}[dr] & & j_2\ar@{-}[dl] & \\
& & \ar@{-}[dr] & &  & & \ar@{-}[dr] & & j_3\ar@{-}[dl] \\
& & & \ar@{-}[ddrr] & & & & \ar@{-}[ddll]  \\
& & & & & & & \\
& & & & &\ar@{-}[d] & &\\
& & & & & &
}}
=$$

$$
\vcenter{\xymatrix@M=0pt@H=0pt@R=13pt@C=13pt@!0{
i_0\ar@{-}[dr]& & i_1\ar@{-}[dl] \\
& \ar@{-}[dr] & & j_0\ar@{-}[dl]  \\
& & \ar@{-}[dr] & & j_1\ar@{-}[dl] \\
& & & \ar@{-}[dr] & & j_2 \ar@{-}[dl] \\
& & & & \ar@{-}[dr] & & j_3\ar@{-}[dl] \\
& & & & & \ar@{-}[d] \\
& & & & & &  
}}
-
\vcenter{\xymatrix@M=0pt@H=0pt@R=13pt@C=13pt@!0{
i_0\ar@{-}[dr]& & i_1\ar@{-}[dl] \\
& \ar@{-}[dr] & & j_1\ar@{-}[dl]  \\
& & \ar@{-}[dr] & & j_0\ar@{-}[dl] \\
& & & \ar@{-}[dr] & & j_2 \ar@{-}[dl] \\
& & & & \ar@{-}[dr] & & j_3\ar@{-}[dl] \\
& & & & & \ar@{-}[d] \\
& & & & & &  
}}
-
\vcenter{\xymatrix@M=0pt@H=0pt@R=13pt@C=13pt@!0{
i_0\ar@{-}[dr]& & i_1\ar@{-}[dl] \\
& \ar@{-}[dr] & & j_2\ar@{-}[dl]  \\
& & \ar@{-}[dr] & & j_0\ar@{-}[dl] \\
& & & \ar@{-}[dr] & & j_1 \ar@{-}[dl] \\
& & & & \ar@{-}[dr] & & j_3\ar@{-}[dl] \\
& & & & & \ar@{-}[d] \\
& & & & & & 
}}
-
\vcenter{\xymatrix@M=0pt@H=0pt@R=13pt@C=13pt@!0{
i_0\ar@{-}[dr]& & i_1\ar@{-}[dl] \\
& \ar@{-}[dr] & & j_3\ar@{-}[dl]  \\
& & \ar@{-}[dr] & & j_0\ar@{-}[dl] \\
& & & \ar@{-}[dr] & & j_1 \ar@{-}[dl] \\
& & & & \ar@{-}[dr] & & j_2\ar@{-}[dl] \\
& & & & & \ar@{-}[d] \\
& & & & & &  
}}
+$$

$$ 
\vcenter{\xymatrix@M=0pt@H=0pt@R=13pt@C=13pt@!0{
i_0\ar@{-}[dr]& & i_1\ar@{-}[dl] \\
& \ar@{-}[dr] & & j_2\ar@{-}[dl]  \\
& & \ar@{-}[dr] & & j_1\ar@{-}[dl] \\
& & & \ar@{-}[dr] & & j_0 \ar@{-}[dl] \\
& & & & \ar@{-}[dr] & & j_3\ar@{-}[dl] \\
& & & & & \ar@{-}[d] \\
& & & & & &  
}}
+
\vcenter{\xymatrix@M=0pt@H=0pt@R=13pt@C=13pt@!0{
i_0\ar@{-}[dr]& & i_1\ar@{-}[dl] \\
& \ar@{-}[dr] & & j_3\ar@{-}[dl]  \\
& & \ar@{-}[dr] & & j_1\ar@{-}[dl] \\
& & & \ar@{-}[dr] & & j_0 \ar@{-}[dl] \\
& & & & \ar@{-}[dr] & & j_2\ar@{-}[dl] \\
& & & & & \ar@{-}[d] \\
& & & & & &  
}}
+
\vcenter{\xymatrix@M=0pt@H=0pt@R=13pt@C=13pt@!0{
i_0\ar@{-}[dr]& & i_1\ar@{-}[dl] \\
& \ar@{-}[dr] & & j_3\ar@{-}[dl]  \\
& & \ar@{-}[dr] & & j_2\ar@{-}[dl] \\
& & & \ar@{-}[dr] & & j_0 \ar@{-}[dl] \\
& & & & \ar@{-}[dr] & & j_1\ar@{-}[dl] \\
& & & & & \ar@{-}[d] \\
& & & & & &  
}}
-
\vcenter{\xymatrix@M=0pt@H=0pt@R=13pt@C=13pt@!0{
i_0\ar@{-}[dr]& & i_1\ar@{-}[dl] \\
& \ar@{-}[dr] & & j_3\ar@{-}[dl]  \\
& & \ar@{-}[dr] & & j_2\ar@{-}[dl] \\
& & & \ar@{-}[dr] & & j_1 \ar@{-}[dl] \\
& & & & \ar@{-}[dr] & & j_0\ar@{-}[dl] \\
& & & & & \ar@{-}[d] \\
& & & & & &  
}}$$
$$=b_0^{\emptyset}-b_1^{\{1\}}-b_1^{\{2\}}-b_1^{\{3\}}+b_2^{\{1,2\}}+b_2^{\{1,3\}}+b_2^{\{2,3\}}-b_3^{\{1,2,3\}}.$$
\end{exple}

Using our notations for the basis of $\GLie([n],[m])$, we can notice that the previous example can be seen as 
applying $(d_{0,1})_*$ to
$(i_0, i_1 | j_0, j_1, j_2, j_3) \in \GLie([5],[1])$, which gives an element in $\GLie([5],[0])$.
And the sum of eight terms we obtain is the decomposition of  $(d_{0,1})_*(i_0, i_1 | j_0, j_1, j_2, j_3)$ in the basis of $\GLie([5],[0])$.
The sum begins with : 
$$(i_0, i_1, j_0, j_1, j_2, j_3) - (i_0, i_1, j_1, j_0, j_2, j_3) - \ldots $$
We can notice that the first term of the sum is in the same stage of the filtration as the element we were starting with, 
while the other 7 terms are in a higher stage of the filtration.

\vspace{1cm}

Let us make some additional observations about Lemma~\ref{crochet} in the general case:
\begin{enumerate}
 \item For $k=0$, the sum on subsets $S$ is composed of a single term for $S=\emptyset$. 
       This term $b_0^{\emptyset}$ in the basis is identified with the tuple $(i_0, i_1, \ldots, i_n, j_0, j_1, \ldots, j_n)$, and has a $+1$ coefficient.
       This means that, if we denote $B_i=(i_0, \ldots, i_n)$ and $B_j=(j_0, \ldots, j_m)$, we obtain the equality $b_0^{\emptyset}=B_iB_j$. 
       Notice that $p(b_0^{\emptyset})=p(B_iB_j)=p(B_i \mid B_j)$.
 \item For $k \geq 1$, notice that all the terms $b_k^S$ appearing in the sum have a $(n+2)$th label equal to $j_l$ for $l>0$.
       As $j_0$ is the $(n+2)$th label of $p(B_i \mid B_j)$ and $j_l > j_0$, using the lexicographical ordering in $Tuple(n)$, 
       we obtain the inequality  $p(b_k^S) > p(B_i \mid B_j)$.
\end{enumerate}

\subsubsection{Compatibility of the filtration with the differential}

We now want to see how the observation from the previous example can be generalized to $(d_{i,j})_*(b)$ where $b$ is a basis element $\GLie([n],[m])$.

\begin{lm}\label{calculdiff}
Let $b=(B_0 \mid \ldots \mid B_m)$ be in $\mathcal B^\GLie (n,m)$ where $m>0$. Let $u$ denote $p(b)$.
\begin{enumerate}
\item [(i)]
If $0 \leq i <m$, then $(d_{i,i+1})_*(b)$ is in $F_u$. Moreover, in $gr_u$,  we obtain $(d_{i,i+1})_*(b)= (B_0 \mid \ldots \mid B_iB_{i+1} \mid \ldots \mid B_m)$.

\item [(ii)]
If $0 \leq i <m$ and $i+2 \leq j \leq m$, then $d_{i,j}(b)$ is in $F_u$. Moreover, in $gr_u$, we obtain $d_{i,j}(b)= 0$.
\end{enumerate}

\end{lm}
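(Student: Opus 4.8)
The plan is to reduce both statements to the single–block computation of Lemma~\ref{crochet}. The first step is to unwind what $(d_{i,j})_*(b)$ is as a morphism in $\GLie([n],[m-1])$. Postcomposing $b=(B_0\mid\ldots\mid B_m)$ with $d_{i,j}$ is prescribed by the shuffle operad composition: the map $d_{i,j}$ merges the source positions $i$ and $j$ into a single target position $i$ via the bracket $\mu$, while relabelling the remaining positions in an order-preserving way. Concretely, the target split tuple has blocks $B_0,\ldots,B_{i-1}$ at positions $0,\ldots,i-1$, the element $\mu(B_i,B_j)$ at position $i$, then $B_{i+1},\ldots,B_{j-1}$ and finally $B_{j+1},\ldots,B_m$. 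Thus all blocks except the one at position $i$ are unchanged, and computing $(d_{i,j})_*(b)$ in the basis amounts to expanding the single product $\mu(B_i,B_j)$, where $B_i$ and $B_j$ are read as left-comb Lie trees. Since $i<j$, condition $(1)$ of Definition~\ref{STuple} gives $k_{i,0}<k_{j,0}$, so the hypothesis $i_0<j_0$ of Lemma~\ref{crochet} holds and the lemma applies verbatim (with $B_i$ playing the role of the $i_\bullet$-tree and $B_j$ that of the $j_\bullet$-tree).

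Applying Lemma~\ref{crochet}, the product $\mu(B_i,B_j)$ decomposes as $\sum_k\sum_S(-1)^k b_k^S$. By the observations following that lemma, the $k=0$ term is the concatenation $B_iB_j$, while every term with $k\geq1$ has its $(\,|B_i|+1)$-th leaf equal to some $j_\ell$ with $\ell>0$, hence strictly larger than $k_{j,0}$; consequently the underlying tuple $p(b_k^S)$ of each such term, compared lexicographically against the $k=0$ term, is strictly larger. This immediately shows that all terms with $k\geq1$ lie strictly above any filtration level reached by the $k=0$ term, so they die in every associated graded they could contribute to; it remains to locate the $k=0$ term relative to $u=p(b)$.

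For part (i), where $j=i+1$, the $k=0$ term produces the split tuple $(B_0\mid\ldots\mid B_iB_{i+1}\mid\ldots\mid B_m)$. Concatenating two consecutive blocks does not alter the left-to-right reading of the leaves, so $p(B_0\mid\ldots\mid B_iB_{i+1}\mid\ldots\mid B_m)=B_0\cdots B_m=u$. Hence the $k=0$ term sits exactly in filtration level $u$ and the higher terms sit strictly above it, which shows $(d_{i,i+1})_*(b)\in F_u$ and that its class in $gr_u$ equals $(B_0\mid\ldots\mid B_iB_{i+1}\mid\ldots\mid B_m)$, as claimed. For part (ii), with $j\geq i+2$, the crucial point is that even the $k=0$ term now has underlying tuple strictly above $u$. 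Indeed, $u$ reads $\ldots B_i\,B_{i+1}\ldots B_{j-1}\,B_j\ldots$, whereas the $k=0$ term reads $\ldots B_i\,B_j\,B_{i+1}\ldots B_{j-1}\ldots$; the two tuples first agree through $B_i$, then $u$ continues with $k_{i+1,0}$ while the target continues with $k_{j,0}$. Since $i+1<j$, condition $(1)$ gives $k_{i+1,0}<k_{j,0}$, so $u$ is lexicographically smaller, i.e. $p(k{=}0\text{ term})>u$. Therefore every term of $(d_{i,j})_*(b)$ lies strictly above $u$, giving $(d_{i,j})_*(b)\in F_u$ and a vanishing class in $gr_u$.

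The main obstacle I expect is bookkeeping rather than conceptual: precisely tracking the reindexing induced by the shuffle surjection underlying $d_{i,j}$, and then correctly identifying the first point of lexicographic divergence in the non-consecutive case so that condition $(1)$ of Definition~\ref{STuple} delivers the strict inequality $k_{i+1,0}<k_{j,0}$. One must also take care that the ``leading term'' analysis of Lemma~\ref{crochet}, which concerns a single block, is applied inside the global ordering on $STuple(n,\N)$ — that is, that modifying only the block at position $i$ while leaving the surrounding blocks fixed does indeed control the full surjection $p$ onto $Tuple(n)$, which it does because the lexicographic comparison is decided at the first divergent leaf.
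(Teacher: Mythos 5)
Your proof is correct and follows essentially the same route as the paper: both reduce to Lemma~\ref{crochet} applied to the single product of the blocks $B_i$ and $B_j$, identify the $k=0$ term as the concatenation $B_iB_j$ with all $k\geq 1$ terms lying strictly higher, and in the non-consecutive case use condition $(1)$ of Definition~\ref{STuple} to get $k_{i+1,0}<k_{j,0}$ so that even the leading term projects strictly above $u$. Your extra care in unwinding the shuffle-operadic postcomposition and in locating the first point of lexicographic divergence is a welcome elaboration of what the paper leaves implicit, but it is not a different argument.
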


That means that $(d_{i,i+1})_*(b)$ has one term (with a plus sign) in the same stage of the filtration as $b$, and other terms in higher stages.
And $(d_{i,j})_*(b)$ for $j>i+1$ has only terms in stages of the filtration higher than the stage containing $b$, and thus vanishes in the graded object.

\begin{proof}

By Lemma \ref{crochet} we see that $(d_{i,i+1})_*(B_0 \mid \ldots \mid B_m)$ is 
a sum of the term $(B_0 \mid \ldots \mid B_iB_{i+1} \mid \ldots \mid B_m)$ 
with the terms $\pm (B_0 \mid \ldots \mid b_k^S \mid \ldots \mid B_m)$ where $k>0$.

Using the projection to $Tuple(n)$ and the lexicographical ordering there, we notice
$u=p(B_0 \mid \ldots \mid B_iB_{i+1} \mid \ldots \mid B_m)$ and $ u < p(B_0 \mid \ldots \mid b_k^S \mid \ldots \mid B_m)$.

Thus we obtain the part (i) of the lemma.

For the second part, first notice that $(d_{i,j})_*(B_0 \mid \ldots \mid B_m)$ is a sum of terms whose smallest one is 
$(B_0 \mid \ldots \mid B_iB_j \mid \ldots \mid \widehat{B_j} \mid \ldots \mid B_m)$.
But $p(B_0 \mid \ldots \mid B_iB_j \mid \ldots \mid \widehat{B_j} \mid \ldots \mid B_m)$ is larger than $p(B_0 \mid \ldots \mid B_m)$ 
because $k_{i+1,0} < k_{j,0}$ as $i+1<j$ by condition $(1)$ in Definition \ref{STuple}.

Thus all terms appearing in $d_{i,j}(B_0 \mid \ldots \mid B_m)$ are in $F_u$ and vanish in $gr_u$.
This concludes the proof of (ii).
\end{proof}

\begin{exple}
An example of the behaviour of $(d_{i,i+1})_*$ was already given in the observation after Example \ref{exampled01}.

Let us now consider an example of the second case: $(d_{0,2})_*$ applied to the following element of the basis of $\GLie([6], [2])$:
$$ b=
\vcenter{\xymatrix@M=0pt@H=0pt@R=13pt@C=13pt@!0{
0\ar@{-}[dr]& & 1\ar@{-}[dl] & \\
& \ar@{-}[dr]& & 5\ar@{-}[dl]\\
& & \ar@{-}[d] \\
& & 0&}}
\
\vcenter{\xymatrix@M=0pt@H=0pt@R=13pt@C=13pt@!0{
2\ar@{-}[ddd]\\
 \\
 \\
1&}}
\
\vcenter{\xymatrix@M=0pt@H=0pt@R=13pt@C=13pt@!0{
3\ar@{-}[dr]& & 6\ar@{-}[dl] & \\
& \ar@{-}[dr]& & 4\ar@{-}[dl]\\
& & \ar@{-}[d] \\
& & 2&}}.$$

The computation gives a sum of four terms of the form
$$\vcenter{\xymatrix@M=0pt@H=0pt@R=13pt@C=13pt@!0{
0\ar@{-}[dr]& & 1\ar@{-}[dl] &&&&& 2\ar@{-}[dddddd]\\
& \ar@{-}[dr] & & 5\ar@{-}[dl]  &&&&\\
& & \ar@{-}[dr] & & \sigma(3)\ar@{-}[dl] &&&\\
& & & \ar@{-}[dr] & & \sigma(6) \ar@{-}[dl] &&\\
& & & & \ar@{-}[dr] & & \sigma(4)\ar@{-}[dl] & \\
& & & & & \ar@{-}[d] & & \\
& & & &  & 0 & &1
}}$$
where $\sigma$ is a bijection of the set $\{ 3, 4, 6 \}$.

As split tuples, the equality reads
$$(d_{0,2})_*(0,1,5 \mid 2 \mid 3,6,4) = $$
$$(0,1,5,3,6,4 \mid 2) - (0,1,5,6,3,4 \mid 2) - (0,1,5,4,3,6 \mid 2) + (0,1,5,6,4,3 \mid 2).$$

Clearly, when projecting on Tuples(6), the four terms of the right hand side are larger than 
the term of the left hand side.
Thus in the associated graded object, the equality reduces to 
$$(d_{0,2})_*(0,1,5 \mid 2 \mid 3,6,4) \cong 0.$$
\end{exple}

\begin{prop} \label{compatible}
The differential of $C^{Leib}_*(P_n)$  is compatible with the filtration.
\end{prop}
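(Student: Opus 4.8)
The plan is to verify directly that $d(F_u) \subseteq F_u$ for every $u \in Tuple(n)$, since this inclusion is exactly what compatibility of the differential with the filtration means. First I would reduce to basis elements: by definition $F_u$ is the $\kk$-span of those $b \in \mathcal B^{\GLie}(n,\N)$ with $p(b) \geq u$, and the differential $d = \sum_{0 \leq i < j \leq m} (-1)^j (d_{i,j})_*$ is $\kk$-linear, so it suffices to show $d(b) \in F_u$ for each such basis element $b$.

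Next I would fix $b = (B_0 \mid \ldots \mid B_m)$ with $p(b) \geq u$, write $u_b = p(b)$, and treat each summand $(d_{i,j})_*(b)$ separately. The entire weight of the argument rests on Lemma \ref{calculdiff}, already established: its part (i) handles the pairs with $j = i+1$, its part (ii) handles the pairs with $j \geq i+2$, and together these two cases exhaust all index pairs $0 \leq i < j \leq m$ appearing in $d$. In each case the lemma gives $(d_{i,j})_*(b) \in F_{u_b}$, so the $\kk$-linear combination $d(b)$ lies in $F_{u_b}$ as well.

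Finally I would pass from $F_{u_b}$ to $F_u$. Since $p(b) = u_b \geq u$, the inclusion of index sets $\{b' : p(b') \geq u_b\} \subseteq \{b' : p(b') \geq u\}$ gives $F_{u_b} \subseteq F_u$; hence $d(b) \in F_u$. Running this over all basis elements spanning $F_u$ yields $d(F_u) \subseteq F_u$, the desired compatibility.

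I do not expect a genuine obstacle at this stage: all the real combinatorial content—determining into which stage of the filtration each term $(d_{i,j})_*(b)$ falls—has already been absorbed into Lemma \ref{calculdiff}, and beneath it into the bracket-rewriting Lemma \ref{crochet} together with the leading-term observations following Example \ref{exampled01}. The only points demanding care are pure bookkeeping: confirming that the two cases of Lemma \ref{calculdiff} jointly cover every pair $(i,j)$ in the sum defining $d$, and checking that the monotonicity of the filtration runs in the direction that makes $u_b \geq u$ force $F_{u_b} \subseteq F_u$.
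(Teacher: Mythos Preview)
Your proposal is correct and follows exactly the paper's approach: the paper's proof is a one-line invocation of Lemma \ref{calculdiff} together with the formula $d = \sum_{0\leq i<j \leq m} (-1)^j (d_{i,j})_*$, and you have simply unpacked that invocation in full detail, including the routine reduction to basis elements and the monotonicity step $F_{u_b} \subseteq F_u$.
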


\begin{proof}
 This is a direct consequence of the previous lemma, using that the differential in  $C^{Leib}_*(P_n)$ is $d= \displaystyle{\sum_{0\leq i<j \leq n} (-1)^j (d_{i,j})_*}$.
\end{proof}

\subsection{Acyclicity of the associated graded complex}

We now identify the associated graded complex with a sum of acyclic ones.

First, let us describe the differential in the associated graded complex. From Lemma \ref{calculdiff}, we obtain
$$d_{gr}(B_0 \mid \ldots \mid B_m)= \sum_{0\leq i <m} (-1)^{i+1} (B_0 \mid \ldots \mid B_iB_{i+1} \mid \ldots \mid B_m).$$

For a fixed $u=(0, k_1, \ldots, k_n) \in Tuple(n)$, we want to describe the elements $b\in STuple(n)$ such that $p(b)=u$.
We use the definition of a split tuple with the condition (3) from Lemma \ref{condition3}, which characterizes the first terms of a block.

\begin{defi}
We call an admissible cut of the tuple $u=(0, k_1, \ldots, k_n)$ an index $i$ with $1 \leq i \leq n$ satisfying the condition
$$\forall \, 1 \leq j \leq m, j>i \Rightarrow k_j > k_i.$$
We denote by $Cuts(u)$ the set of admissible cuts of $u$.
\end{defi}

Notice that $Cuts(u)$ is non-empty as $n$ always belongs to $Cuts(u)$ (the condition of the definition is empty for this index).

\begin{exple}
 Let $u$ be $(0,2,1,5,3,4)$. The set $Cuts(u)$ is $\{2,4,5\}$. 
These indices $2$, $4$ and $5$ are exactly the ranks where it is possible to split the tuple.
For instance, $(0,2 \mid 1,5 \mid 3 \mid 4)$ is the split tuple obtained by cutting as most as possible.
\end{exple}

\begin{lm}
The set of $b\in STuple(n)$ such that $p(b)=u=(0, k_1, \ldots, k_n)$ is in bijection with the power set $\mathscr P (Cuts(u))$, via 
$$(B_0 \mid \ldots \mid B_m) \mapsto \{ |B_0|, |B_0| + |B_1|, \ldots, |B_0| + \ldots + |B_{m-1}| \}$$

The inverse bijection is given by 
$$\{i_1, \ldots, i_m\} \mapsto (0, k_1, \ldots, k_{i_1 -1} \mid k_{i_1}, k_{i_1+1}, \ldots  \ldots \mid k_{i_2} \ldots \mid \ldots \mid k_{i_m}, k_{i_m+1}, \ldots ).$$
\end{lm}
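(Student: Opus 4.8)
The plan is to recognize that a split tuple lying over $u=(0,k_1,\ldots,k_n)$ is nothing more than a choice of where to insert separating bars into the sequence $(0,k_1,\ldots,k_n)$, and that such a choice yields a legitimate element of $STuple(n)$ precisely when every bar is placed at an admissible cut. First I would make the forward map explicit. If $b=(B_0\mid\ldots\mid B_m)$ satisfies $p(b)=u$, then erasing the bars returns $u$, so each block $B_\ell$ is a consecutive segment of $(0,k_1,\ldots,k_n)$ and the partial sums $|B_0|,\,|B_0|+|B_1|,\,\ldots,\,|B_0|+\cdots+|B_{m-1}|$ are exactly the $0$-indexed positions in $u$ at which the blocks $B_1,\ldots,B_m$ begin. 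Hence $b$ is determined by the set $S=\{i_1,\ldots,i_m\}\subseteq\{1,\ldots,n\}$ of these starting positions (the position $0$ is never among them, since $B_0$ always begins the tuple), and conversely each such $S$ dictates a unique insertion of bars; this already yields a bijection between ordered partitions of $u$ into consecutive blocks and subsets of $\{1,\ldots,n\}$.

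The core of the argument is to single out which of these subsets produce genuine elements of $STuple(n)$. Here I would apply Lemma~\ref{condition3}, which lets me test membership using condition~$(3)$ alone: the first term of every block must be smaller than all terms lying after it. For $B_0$ this holds automatically, as it begins with $0$. For a block $B_\ell$ with $\ell\geq1$ starting at position $s=i_\ell$, its first term is $k_s$ and the terms after it are precisely $k_{s+1},\ldots,k_n$, so condition~$(3)$ applied to $B_\ell$ says exactly $k_s<k_j$ for all $j>s$, which is verbatim the defining property of $s\in Cuts(u)$. The key point I would stress is that condition~$(3)$ constrains only the initial entry of each block and imposes nothing on the interior entries; consequently the split tuple is valid if and only if each of its block starts is an admissible cut, i.e.\ if and only if $S\subseteq Cuts(u)$.

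Putting these together, the forward map restricts to a bijection from $\{b\in STuple(n):p(b)=u\}$ onto $\mathscr P(Cuts(u))$, sending $(B_0\mid\ldots\mid B_m)$ to $\{|B_0|,\ldots,|B_0|+\cdots+|B_{m-1}|\}$; its inverse is the bar-insertion map $\{i_1,\ldots,i_m\}\mapsto(0,k_1,\ldots,k_{i_1-1}\mid k_{i_1},\ldots\mid\ldots\mid k_{i_m},\ldots,k_n)$ recorded in the statement, and the two are mutually inverse directly by construction.

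I expect the only genuine subtlety to be the clean identification of condition~$(3)$ with the admissible-cut condition. One must verify carefully that ``every term occurring after the first term of $B_\ell$'' is really the full set $\{k_j:j>i_\ell\}$ --- so that the contributions of the later blocks add nothing beyond what the positional order already encodes --- and, just as importantly, that no condition is placed on the non-initial entries of a block, so that admissibility needs to be checked only at the cut positions. Once this translation is secured, both the well-definedness of the forward map and its surjectivity onto $\mathscr P(Cuts(u))$ follow with no further computation.
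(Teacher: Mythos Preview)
Your argument is correct, and it makes explicit precisely the verification the paper leaves to the reader: the lemma is stated in the paper without proof, the authors evidently regarding it as immediate from the reformulation of the split-tuple conditions given in Lemma~\ref{condition3} and the definition of admissible cuts. Your identification of condition~$(3)$ at the start of each block with the admissible-cut condition is exactly the point, and your care in checking that the later blocks impose nothing beyond the positional inequality is the right way to close the loop.
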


Recall that for any non-empty finite set $E=\{e_0, \ldots, e_n\}$, 
there exists an associated chain complex $C_*(E)$ whose basis is the power set $\mathscr P (E)$, a subset of cardinality $k$ giving a generator in degree $k$.
The differential $d: C_{l+1} \to C_l$ is given by $d(\{e_{i_0}, \ldots e_{i_\ell} \}) = \displaystyle{ \sum_{j=0}^\ell (-1)^j \{e_{i_0}, \ldots, \widehat{e_{i_j}}, \ldots, e_{i_\ell} \} }$.
This complex $C_*(E)$ is acyclic (for instance it can be seen as a complex associated to a simplex).

\begin{lm}
 There is an isomorphism of complexes between $(gr_u,d_{gr})$ and $(C_*(Cuts(u)),d)$.
\end{lm}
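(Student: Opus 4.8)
The plan is to use the combinatorial bijection of the previous lemma to transport the differential $d_{gr}$ to the simplicial differential on $C_*(Cuts(u))$. Concretely, I would define a map $\phi$ on basis elements by sending a split tuple $(B_0 \mid \ldots \mid B_m)$ with $p(B_0\mid\ldots\mid B_m)=u$ to the subset $\{|B_0|,\, |B_0|+|B_1|,\, \ldots,\, |B_0|+\ldots+|B_{m-1}|\}\in\mathscr P(Cuts(u))$, and extend $\kk$-linearly. The previous lemma already states that this assignment is a bijection onto $\mathscr P(Cuts(u))$, so $\phi$ is an isomorphism of graded $\kk$-modules; moreover a basis element of $\GLie([n],[m])$ sits in homological degree $m$ and is sent to a subset of cardinality $m$, so $\phi$ preserves the homological degree.

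The next step is to match the two differentials term by term. The key observation is that merging two consecutive blocks $B_i$ and $B_{i+1}$ amounts, on the level of subsets, to deleting the single cut at position $|B_0|+\ldots+|B_i|$; and the merged split tuple is again a split tuple with the same image $u$ under $p$ (so it stays in $gr_u$), as guaranteed by the concatenation lemma. Writing $S=\phi(B_0\mid\ldots\mid B_m)=\{c_1<\ldots<c_m\}$ with $c_{i+1}=|B_0|+\ldots+|B_i|$, the $i$-th term of $d_{gr}$ removes exactly the cut $c_{i+1}$, so using the formula for $d_{gr}$ coming from Lemma~\ref{calculdiff} one gets $\phi\big(d_{gr}(B_0\mid\ldots\mid B_m)\big)=\sum_{i=0}^{m-1}(-1)^{i+1}\,(S\setminus\{c_{i+1}\})$, whereas the simplicial differential of $C_*(Cuts(u))$ gives $d(S)=\sum_{k=0}^{m-1}(-1)^{k}\,(S\setminus\{c_{k+1}\})$.

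Comparing these two expressions shows $\phi\circ d_{gr}=-\,d\circ\phi$, so $\phi$ intertwines $d_{gr}$ with $-d$ rather than with $d$. To upgrade this to the asserted isomorphism of complexes, I would twist $\phi$ by a degree-dependent sign, setting $\psi:=(-1)^m\phi$ in homological degree $m$. Since $d_{gr}$ lowers degree by one, a one-line computation gives $\psi\circ d_{gr}=(-1)^{m-1}\phi\circ d_{gr}=(-1)^{m-1}(-d\circ\phi)=(-1)^m\,d\circ\phi=d\circ\psi$, so $\psi$ is a genuine chain map; being a degree-preserving bijection on bases, it is the desired isomorphism of complexes.

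The genuinely substantive input is the identification of block-merging with cut-deletion, which is already packaged into the previous lemma together with the description of $d_{gr}$ from Lemma~\ref{calculdiff}; everything else is bookkeeping. The only place to be careful is the sign: the naive bijection $\phi$ is an anti-chain-map, and I would record the degree-twist $\psi=(-1)^m\phi$ explicitly rather than absorbing the sign tacitly, since it is precisely what makes the statement an honest isomorphism of complexes (and in particular lets one transport the acyclicity of $C_*(Cuts(u))$ to $gr_u$).
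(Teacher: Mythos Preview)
Your proposal is correct and follows essentially the same approach as the paper: both use the bijection of the previous lemma to identify basis elements, observe that the surviving term $(d_{i,i+1})_*$ in $d_{gr}$ corresponds to deleting the $(i+1)$-th cut, and compare with the simplicial differential on $C_*(Cuts(u))$. The only difference is that you are more scrupulous about signs: with the paper's stated convention $d(\{e_{i_0},\ldots,e_{i_\ell}\})=\sum_{j=0}^{\ell}(-1)^j\{\ldots,\widehat{e_{i_j}},\ldots\}$, removing the $(i+1)$-th cut carries the sign $(-1)^i$, not $(-1)^{i+1}$ as the paper asserts, so the naive bijection is an anti-chain map and your degree-twist $\psi=(-1)^m\phi$ is the honest fix. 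This is a genuine (if harmless) improvement over the paper's argument, which silently absorbs the discrepancy; since $(C_*,d)$ and $(C_*,-d)$ are isomorphic via the same twist, the conclusion is unaffected either way.
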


\begin{proof}
Recall that in the graded object, the part $d_{i,i+1}$ of the differential is the concatenation of the blocks $B_i$ and $B_{i+1}$, that is the removal of the $(i+1)$th cut.
The part $d_{i,i+1}$ comes with the sign $(-1)^{i+1}$, and in the differential of the chain complex $(C_*(Cuts(u)),d)$, removing the $(i+1)$th cut comes with the same sign.
\end{proof}

This isomorphim and the acyclicity of the complex $\big(C_*(Cuts(u)),d\big)$ imply the following lemma:

\begin{lm}
 The complex $(gr_u, d_{gr})$ is acyclic for  $u \in Tuple(n)$.
\end{lm}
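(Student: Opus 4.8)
The plan is to deduce the statement immediately from the two preceding lemmas together with the recalled acyclicity of the complexes $C_*(E)$. The previous lemma provides an explicit isomorphism of complexes $(gr_u, d_{gr}) \simeq (C_*(Cuts(u)), d)$, and acyclicity is invariant under isomorphism of complexes; so it suffices to verify that $(C_*(Cuts(u)), d)$ falls under the recalled acyclicity result.

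The one hypothesis of that result is that the indexing set be non-empty, and this is exactly the point to check. Here $Cuts(u)$ is non-empty because, as noted right after the definition of admissible cuts, the index $n$ always belongs to $Cuts(u)$ (the defining condition is vacuous for this largest index). Hence $C_*(Cuts(u))$ is the complex associated to a non-empty finite set and is acyclic by the recalled fact.

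Combining the two observations finishes the argument: transporting acyclicity across the isomorphism $(gr_u, d_{gr}) \simeq (C_*(Cuts(u)), d)$ shows that $(gr_u, d_{gr})$ is acyclic. There is no real obstacle left at this stage, since all the substance has been front-loaded into the bijection identifying the basis of $gr_u$ with $\mathscr P(Cuts(u))$ and into the matching of the differentials (notably the sign bookkeeping that identifies removal of the $(i+1)$th cut with the simplicial face operator). The only delicate point is not to overlook the non-emptiness of $Cuts(u)$, since that is the single hypothesis on which the acyclicity genuinely depends.
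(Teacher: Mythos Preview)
Your proof is correct and follows exactly the same approach as the paper: deduce acyclicity from the isomorphism $(gr_u,d_{gr})\simeq (C_*(Cuts(u)),d)$ together with the recalled acyclicity of $C_*(E)$ for non-empty $E$. Your explicit verification that $Cuts(u)\neq\emptyset$ is a helpful addition, though the paper already noted this fact right after the definition of admissible cuts.
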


We can now finish the proof of Proposition \ref{Leib-projectifs}.


\begin{proof}[Proof of Proposition \ref{Leib-projectifs}]
The filtration of $C_*^{Leib}(P_n)$ is bounded since $Tuple(n)$ is a finite set.
So, by the classical convergence theorem of the spectral sequence associated to a filtration (see \cite[Theorem 5.5.1]{Weibel}), this spectral sequence converges to $H_*^{Leib}(P_n)$.
The previous lemma implies that 
$\displaystyle{gr C^{Leib}_*(P_n) = \bigoplus_{u \in Tuple(n)} gr_u} $ is a sum of acyclic complexes, and therefore is also acyclic. We deduce that $H_*^{Leib}(P_n)=0$
\end{proof}



\bibliographystyle{amsplain}
\bibliography{biblio-Gamma-Lie}

\providecommand{\bysame}{\leavevmode\hbox to3em{\hrulefill}\thinspace}
\providecommand{\MR}{\relax\ifhmode\unskip\space\fi MR }
\providecommand{\MRhref}[2]{%
  \href{http://www.ams.org/mathscinet-getitem?mr=#1}{#2}
}
\providecommand{\href}[2]{#2}
\begin{thebibliography}{10}

\bibitem{Borceux2}
Francis Borceux, \emph{Handbook of categorical algebra. 2}, Encyclopedia of
  Mathematics and its Applications, vol.~51, Cambridge University Press,
  Cambridge, 1994, Categories and structures. \MR{1313497 (96g:18001b)}

\bibitem{Cuvier}
C.~Cuvier, \emph{Alg\`ebres de {L}eibnitz: d\'efinitions, propri\'et\'es}, Ann.
  Sci. \'Ecole Norm. Sup. (4) \textbf{27} (1994), no.~1, 1--45. \MR{1258404
  (95a:17004)}

\bibitem{DK2010}
Vladimir Dotsenko and Anton Khoroshkin, \emph{Gr\"obner bases for operads},
  Duke Math. J. \textbf{153} (2010), no.~2, 363--396. \MR{2667136
  (2011h:18010)}

\bibitem{FT}
B.~L. Fe{\u\i}gin and B.~L. Tsygan, \emph{Additive {$K$}-theory}, {$K$}-theory,
  arithmetic and geometry ({M}oscow, 1984--1986), Lecture Notes in Math., vol.
  1289, Springer, Berlin, 1987, pp.~67--209. \MR{923136 (89a:18017)}

\bibitem{Fresse}
Benoit Fresse, \emph{Functor homology and operadic homology},
  http://math.univ-lille1.fr/~fresse/OperadicFunctorHomology.pdf.

\bibitem{Eric}
Eric Hoffbeck, \emph{A {P}oincar\'e-{B}irkhoff-{W}itt criterion for {K}oszul
  operads}, Manuscripta Math. \textbf{131} (2010), no.~1-2, 87--110.
  \MR{2574993 (2011c:18013)}

\bibitem{Liv-Richter}
Muriel Livernet and Birgit Richter, \emph{An interpretation of {$E_n$}-homology
  as functor homology}, Math. Z. \textbf{269} (2011), no.~1-2, 193--219.
  \MR{2836066}

\bibitem{Loday1}
Jean-Louis Loday, \emph{Une version non commutative des alg\`ebres de {L}ie:
  les alg\`ebres de {L}eibniz}, Enseign. Math. (2) \textbf{39} (1993), no.~3-4,
  269--293. \MR{1252069 (95a:19004)}

\bibitem{Loday-Cyclic}
\bysame, \emph{Cyclic homology}, second ed., Grundlehren der Mathematischen
  Wissenschaften [Fundamental Principles of Mathematical Sciences], vol. 301,
  Springer-Verlag, Berlin, 1998, Appendix E by Mar{\'{\i}}a O. Ronco, Chapter
  13 by the author in collaboration with Teimuraz Pirashvili. \MR{1600246
  (98h:16014)}

\bibitem{Loday2}
\bysame, \emph{Algebraic {$K$}-theory and the conjectural {L}eibniz
  {$K$}-theory}, $K$-Theory \textbf{30} (2003), no.~2, 105--127, Special issue
  in honor of Hyman Bass on his seventieth birthday. Part II. \MR{2064235
  (2005e:18019)}

\bibitem{LQ}
Jean-Louis Loday and Daniel Quillen, \emph{Cyclic homology and the {L}ie
  algebra homology of matrices}, Comment. Math. Helv. \textbf{59} (1984),
  no.~4, 569--591. \MR{780077 (86i:17003)}

\bibitem{LV}
Jean-Louis Loday and Bruno Vallette, \emph{Algebraic operads}, Grundlehren der
  Mathematischen Wissenschaften [Fundamental Principles of Mathematical
  Sciences], vol. 346, Springer, Heidelberg, 2012. \MR{2954392}

\bibitem{MayT}
J.~P. May and R.~Thomason, \emph{The uniqueness of infinite loop space
  machines}, Topology \textbf{17} (1978), no.~3, 205--224. \MR{508885
  (80g:55015)}

\bibitem{PiraR-2000}
T.~Pirashvili and B.~Richter, \emph{Robinson-{W}hitehouse complex and stable
  homotopy}, Topology \textbf{39} (2000), no.~3, 525--530. \MR{1746906
  (2001e:13019)}

\bibitem{PiraR-2002}
\bysame, \emph{Hochschild and cyclic homology via functor homology}, $K$-Theory
  \textbf{25} (2002), no.~1, 39--49. \MR{1899698 (2003c:16011)}

\bibitem{Pira-Hodge}
Teimuraz Pirashvili, \emph{Hodge decomposition for higher order {H}ochschild
  homology}, Ann. Sci. \'Ecole Norm. Sup. (4) \textbf{33} (2000), no.~2,
  151--179. \MR{1755114 (2001e:19006)}

\bibitem{Reutenauer}
Christophe Reutenauer, \emph{Free {L}ie algebras}, London Mathematical Society
  Monographs. New Series, vol.~7, The Clarendon Press Oxford University Press,
  New York, 1993, Oxford Science Publications. \MR{1231799 (94j:17002)}

\bibitem{RW2002}
Alan Robinson and Sarah Whitehouse, \emph{Operads and {$\Gamma$}-homology of
  commutative rings}, Math. Proc. Cambridge Philos. Soc. \textbf{132} (2002),
  no.~2, 197--234. \MR{1874215 (2002j:18011)}

\bibitem{Weibel}
Charles~A. Weibel, \emph{An introduction to homological algebra}, Cambridge
  Studies in Advanced Mathematics, vol.~38, Cambridge University Press,
  Cambridge, 1994. \MR{1269324 (95f:18001)}

\end{thebibliography}

\end{document}